\title{\bf Stochastic Lagrangian flows on the group of volume-preserving
homeomorphisms of the spheres}
\author{Dejun Luo\footnote{Email: luodj@amss.ac.cn. Supported in part by the Key Laboratory of
RCSDS, CAS (2008DP173182), NSFC (11101407, 11371099) and AMSS (Y129161ZZ1)}
\vspace{3mm}\\
{\footnotesize Institute of Applied Mathematics, Academy of Mathematics and Systems Science,}\\
{\footnotesize Chinese Academy of Sciences, Beijing 100190, China}
}
\date{}
\def\R{\mathbb{R}}
\def\E{\mathbb{E}}
\def\T{\mathbb{T}}
\def\d{\textup{d}}
\def\<{\langle}
\def\>{\rangle}
\newcommand{\ra}{\rightarrow}
\def\F{\mathcal{F}}
\def\D{\mathcal{D}}
\def\G{\mathcal{G}}
\def\L{\mathcal{L}}
\def\div{\textup{div}}
\newtheorem{theorem}{Theorem}[section]
\newtheorem{lemma}[theorem]{Lemma}       %与theorem共用一个计数器
\newtheorem{corollary}[theorem]{Corollary}
\newtheorem{proposition}[theorem]{Proposition}
\newtheorem{remark}[theorem]{Remark}
\begin{document}

\maketitle
\makeatletter % '@' is now a normal "letter" for TeX
\renewcommand\theequation{\thesection.\arabic{equation}}
\@addtoreset{equation}{section}
\makeatother % '@' is restored as a "non-letter" character for TeX

\vspace{-6mm}

\begin{abstract}
We consider stochastic differential equations on the group of volume-preserving
homeomorphisms of the sphere $S^d\,(d\geq 2)$. The diffusion part is given by the
divergence free eigenvector fields of the Laplacian acting on $L^2$-vector fields,
while the drift is some other divergence free vector field. We show that the equation
generates a unique flow of measure-preserving homeomorphisms when the drift has first
order Sobolev regularity, and derive a formula for the distance between two Lagrangian flows.
We also compute the rotation process of two particles on the sphere $S^2$
when they are close to each other.
\end{abstract}

{\bf MSC2000:} 58J65, 60H10

{\bf Key words:} Navier--Stokes equation, Brownian motion, group of volume-preserving homeomorphisms,
rotation process

\section{Introduction}

It is well known that the Euler equation in hydrodynamics
  \begin{equation}\label{Euler}
  \frac{\partial u}{\partial t}+(u\cdot\nabla)u=\nabla p,\quad \div(u)=0
  \end{equation}
describes the evolution of the velocity $u$ of the non-viscous fluid.
In 1966, Arnold \cite{Arnold66} gave a geometric interpretation to \eqref{Euler}.
More precisely, he found that the flow $g(t)$ of volume-preserving
homeomorphisms on a manifold $M$ is a critical point of the energy
functional
  $$S[g]=\frac12\int_0^T\|\dot g(t)\|_{L^2}^2\,\d t,$$
if and only if its velocity $u(t,x)=\dot g(t,g^{-1}(t,x))$ solves the Euler
equation \eqref{Euler}. Inspired by this pioneer work, many
researchers have tried to establish variational formulations for
the Navier--Stokes equation
  \begin{equation}\label{Navier-Stokes}
  \frac{\partial u}{\partial t}+(u\cdot\nabla)u-\nu\triangle u=\nabla p,\quad \div(u)=0,
  \end{equation}
where $\nu>0$ is the viscosity of the fluid.

Following some ideas in \cite{Nakagomi, Yasue},
Cipriano and Cruzeiro \cite{CiprianoCruzeiro07}
presented a stochastic variational principle for the Navier--Stokes equation
on the two dimensional flat torus $\T^2$, with a modified energy functional
$S$. Their approach is based on the construction of a diffusion
process on the group of measure-preserving homeomorphisms on the
torus, and the generator of this diffusion coincides with
the Laplacian operator of $\T^2$. Malliavin \cite{Malliavin99} first
constructed such a flow on the group of homeomorphisms of the unit circle
(cf. \cite{AiraultRen02, Fang02} for more detailed studies).
After the work \cite{CiprianoCruzeiro07}, a series of papers
by Cruzeiro and her collaborators appeared, see for instance
\cite{CruzeiroFlandoliMalliavin07, AC12a, AC12b, AAC13}. In particular,
Arnaudon and Cruzeiro \cite{AC12b} extended the stochastic variational principle
to the Navier--Stokes equation on a general compact Riemannian manifold
$M$ without boundary, and studied the stability of flows on the group of
homeomorphisms of $M$.

In this paper we consider the stochastic Lagrangian flow on the group of
volume-preserving homeomorphisms on the sphere $S^d\, (d\geq 2)$, with the purpose of giving
another example for the general framework studied in \cite{AC12b, AAC13}. Our motivation
comes also from the studies in \cite{LeJan} of Sobolev isotropic flows on $S^d$ and $\R^d$
which serve as examples for illustrating the notion of statistical solution
proposed in that paper. Later on, it was shown in \cite{FangZhang06} that the isotropic flow
on $S^d$ corresponding to the critical Sobolev exponent is indeed a flow of homeomorphisms
(see \cite{Luo} for the case of $\R^d$). The large deviation principles of the isotropic flow
of homeomorphisms on $S^d$ are studied in \cite{XuZhang, Wang}.

Inspired by these works, the divergence
free eigenvector fields of the Laplace operator acting on vector fields over
$S^d$, which constitute an orthonormal basis of the space of square integrable and
divergence free vector fields, will be taken as the diffusion coefficients in this paper.
In Section \ref{sect-notations}, we present some useful properties of these vector
fields and some notations concerning the group of volume-preserving homeomorphisms of $S^d$.
Then we prove in Section \ref{sect-SLF} the existence of a unique flow associated to stochastic
equations on the group of homeomorphisms when the drift is a divergence free vector
field belonging to $H^1$. We derive in Section \ref{sect-distance} a formula for the distance
between two stochastic Lagrangian flows generated by the same equation but with different initial
conditions. Finally we confine ourselves to $S^2$ in Section \ref{sect-rotation} and
compute explicitly the rotation of two particles when their distance is small,
by applying the formula given in \cite[Lemma 7.1]{AC12b}.

\section{Notations and preliminary results} \label{sect-notations}

We first introduce some notations necessary for defining the measure-preserving isotropic
flow on the sphere $S^d$ (see also \cite[Section 9]{LeJan} or \cite[Section 1]{FangZhang06}
for the more general case where the flows may be compressible).
Let $d(\cdot,\cdot)$ be the Riemannian distance function on $S^d$ which satisfies
  $$\cos d(x,y)=\<x,y\>_{\R^{d+1}},\quad x,y\in S^d,$$
thus $d(x,y)$ is exactly the angle between $x$ and $y$.
We have for all $x,y\in S^d$,
  \begin{equation}\label{sect-2.0}
  |x-y|_{\R^{d+1}}\leq d(x,y)\leq \frac\pi 2 |x-y|_{\R^{d+1}}.
  \end{equation}
%The distance $d(x,y)$ induces an $L^2$-metric on $G_V^0$ as follows:
%  $$\rho(\varphi,\psi)=\bigg(\int_{S^d}d^2(\varphi(x),\psi(x))^2\,\d x\bigg)^{1/2},
%  \quad \varphi,\psi\in G_V^0.$$

Let $\Delta$ be the Laplacian operator acting on vector fields over
$S^d$. It is well known that the space of vector fields is the
direct sum of the subspace of gradient vector fields and that of
divergence free vector fields. Since we are concerned with the flow of
measure-preserving homeomorphisms on $S^d$, we only need the eigenvector fields of
$\Delta$ which are divergence free.

For $\ell\geq 1$, set $c_{\ell,\delta}=(\ell+1)(\ell+d-2)$. Then $\{c_{\ell,\delta};\ell\geq 1\}$
are the eigenvalues of $\Delta$ corresponding to the divergence free eigenvector fields.
Denote by ${\cal D}_\ell$ the eigenspace associated to $c_{\ell,\delta}$ and
$D_\ell=\hbox{\rm dim}({\cal D}_\ell)$ the dimension of ${\cal D}_\ell$.
It is known that
  $$D_\ell\sim O(\ell^{d-1})\quad \hbox{\rm as}\ \ \ell\ra +\infty.$$
For $\ell\geq1$, let $\{A_{\ell,k}; k=1, \ldots, D_\ell\}$ be an orthonormal basis of
${\cal D}_\ell$ in $L^2$:
  $$\int_{S^d} \big<A_{\ell,k}(x), A_{\alpha,\beta}(x) \big>\, \d x
  =\delta_{\ell\alpha}\delta_{k\beta}.$$
Weyl's theorem implies that the vector fields $\{A_{\ell,k};k=1,\ldots,D_\ell,\, \ell\geq1\}$ are smooth.

For $s>0$, let $G^s(S^d)$ be the infinite-dimensional group of homeomorphisms of $S^d$
which belong to the Sobolev space $H^s$ of order $s$. Denote by $G_V^s(S^d)$ the subgroup
consisting of volume-preserving homeomorphisms. In the following we shall simply
write $G^s$ and $G_V^s$ whenever there is no confusion. The Lie algebras of $G^s$
and $G_V^s$ are denoted by $\G^s$ and $\G_V^s$, respectively.
$\G^s$ is a Sobolev space of vector fields on $S^d$,
which is the completion of smooth vector fields with respect to the norm
  $$\|V\|_{H^s}^2 =\int_{S^d}\bigl<(-\Delta +1)^sV,V \bigr>\, \d x.$$
As a subspace of $\G^s$, $\G_V^s$ consists of divergence free vector fields,
having $\big\{ A_{\ell,k}/ (1+c_{\ell,\delta})^{s/2};k=1,\ldots,D_\ell,\ell\geq1\big\}$
as an orthonormal basis.

Now we collect some useful identities regarding the eigenvector fields
$\{A_{\ell,k}; k=1,\ldots,D_\ell,\, \ell\geq1\}$. To this end, denote by $c_d=\int_0^\pi
\sin^d\varphi\,\d\varphi$ and for $\theta\in[0,\pi]$,
  $$\gamma_\ell(\cos\theta)=\frac1{c_d}\int_0^\pi \big(\cos\theta-\sqrt{-1}\,\sin\theta\cos\varphi\big)^{\ell-1}
  \sin^d \varphi\,\d\varphi.$$
Remark that $\gamma_\ell$ is a real-valued function and $\gamma_\ell(1)=1$, $|\gamma_\ell(t)|\leq 1$ for all $t\in[-1,1]$.

\begin{lemma}\label{sect-2-lem-1}
\begin{itemize}
\item[\rm(i)] Let $\nabla$ be the covariant derivative on $S^d$. We have
  $$\sum_{k=1}^{D_\ell} \nabla_{A_{\ell,k}} A_{\ell,k}=0.$$
\item[\rm(ii)] Let $x,y\in S^d$ and $\theta$ the angle between $x$ and $y$. Then
  $$\frac{d}{D_\ell}\sum_{k=1}^{D_\ell}\<A_{\ell,k}(x),A_{\ell,k}(y)\>_{\R^{d+1}}
  =d\cos\theta\, \gamma_\ell(\cos\theta)-\sin^2\theta\, \gamma'_\ell(\cos\theta).$$
\item[\rm(iii)] Let $x,y\in S^d$ and $\theta$ the angle between them. Then
  \begin{align*}
  \frac{d}{D_\ell}\sum_{k=1}^{D_\ell}\<A_{\ell,k}(x),y\>_{\R^{d+1}}^2&=\sin^2\theta,\cr
  \frac{d}{D_\ell}\sum_{k=1}^{D_\ell}\big(\<A_{\ell,k}(x),y\>_{\R^{d+1}}+\<A_{\ell,k}(y),x\>_{\R^{d+1}}\big)^2
  &=2\sin^2\theta [1-\gamma_\ell(\cos\theta)].
  \end{align*}
\end{itemize}
\end{lemma}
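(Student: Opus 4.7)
The plan is to analyze the $SO(d+1)$-equivariant matrix-valued kernel
\[
K(x,y) \;=\; \sum_{k=1}^{D_\ell} A_{\ell,k}(x) \otimes A_{\ell,k}(y)
\]
of the eigenspace $\D_\ell$; all three assertions will follow from the structure of $K$ combined with the facts that each $A_{\ell,k}$ is tangent to $S^d$ and divergence free. Note that $K$ is intrinsically defined (independent of the chosen orthonormal basis of $\D_\ell$), and it is invariant under the diagonal action of $SO(d+1)$ since this action commutes with $\Delta$ and hence preserves $\D_\ell$.

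For part (i), I would set $T(x):=K(x,x)$, a symmetric $(2,0)$-tensor field on $S^d$. Being $SO(d+1)$-invariant and with the isotropy group $SO(d)$ acting irreducibly on $T_xS^d$ (this is where $d\geq 2$ enters), Schur's lemma forces $T=\rho\,g$ for a constant $\rho$, which is pinned down as $\rho=D_\ell/d$ by tracing and integrating (with $\mathrm{Vol}(S^d)=1$). Since each $A_{\ell,k}$ is divergence free, a short coordinate computation gives
\[
\sum_k \nabla_{A_{\ell,k}} A_{\ell,k} \;=\; \div(T) \;=\; \rho\,\div(g)\;=\;0,
\]
the last equality by metric compatibility of the Levi--Civita connection. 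An immediate corollary is that $\sum_k\<A_{\ell,k}(x),v\>^2 = (D_\ell/d)|v|^2$ for every $v\in T_xS^d$; applying this with $v$ equal to the tangential projection of $y$ onto $T_xS^d$, whose squared Euclidean norm is $1-\cos^2\theta=\sin^2\theta$, yields the first identity of (iii).

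For (ii) and the second identity of (iii), the same symmetry principle pins down the general form of $K(x,y)$: as an $SO(d+1)$-equivariant function of the pair, it is a linear combination of $I$, $x\otimes x$, $y\otimes y$, $x\otimes y$ and $y\otimes x$ with scalar coefficients depending only on $\cos\theta$, and the tangency constraints $K(x,y)y=0$ and $x^\top K(x,y)=0$ (from $\<A_{\ell,k}(z),z\>=0$ at $z=x,y$) cut this down to only two unknown scalar functions. Given $K$, identity (ii) is just the trace $\mathrm{tr}\,K(x,y)$, while the second identity in (iii) expands to $2\sum_k\<A_{\ell,k}(x),y\>^2+2\,y^\top K(x,y)\,x$, whose first term is already known. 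The real obstacle is to match these two unknown scalar coefficients with $\gamma_\ell(\cos\theta)$ and $\gamma_\ell'(\cos\theta)$; I would do this by feeding the eigenvalue equation $\Delta_x K(x,y)=-c_{\ell,\delta}K(x,y)$ into the ansatz to obtain an ODE system in $\cos\theta$, solving under the boundary data at $\theta=0$ provided by part (i), and matching the unique solution against the integral representation of $\gamma_\ell$ given in its definition. This analytic identification is the heart of the lemma; the remaining manipulations are routine algebra using $\sin^2\theta=1-\cos^2\theta$.
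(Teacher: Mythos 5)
The paper itself offers no proof of this lemma---it simply cites \cite{FangZhang06}, Propositions A.3--A.5---so your argument is necessarily an independent route, and I will judge it on its own terms. The parts you actually carry out are sound: the kernel $K(x,y)=\sum_k A_{\ell,k}(x)\otimes A_{\ell,k}(y)$ is basis-independent and $SO(d+1)$-equivariant, Schur's lemma for the irreducible action of the isotropy group $SO(d)$ on $T_xS^d$ gives $K(x,x)=(D_\ell/d)\Pi_x$ (with $\Pi_x$ the tangential projection), the normalization $\rho=D_\ell/d$ follows correctly from tracing and integrating, and the identity $\sum_k\nabla_{A_{\ell,k}}A_{\ell,k}=\div\big(K(x,x)\big)$ does use divergence-freeness of each $A_{\ell,k}$ in the right place. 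This settles (i) and the first identity of (iii).

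For (ii) and the second identity of (iii), however, there is a genuine gap: everything that is specific to $\gamma_\ell$---which is the entire content of those two formulas---is deferred to the final ``analytic identification,'' which you describe as a plan rather than execute. Reducing $K(x,y)$ to two unknown radial functions via equivariance and tangency is the easy half; showing that these functions are the stated combinations of $\gamma_\ell(\cos\theta)$ and $\gamma_\ell'(\cos\theta)$ requires deriving the coupled ODE system from $\Delta_xK=-c_{\ell,\delta}K$ (a nontrivial covariant computation), establishing uniqueness, and then recognizing the solution as the normalized Gegenbauer polynomial furnished by the Laplace-type integral defining $\gamma_\ell$; none of that is ``routine algebra.'' Worse, the plan as stated is underdetermined: the kernel of the \emph{gradient} eigenfields satisfies the same tangency constraints, is also proportional to $\Pi_x$ on the diagonal, and satisfies an eigenvalue equation---on $S^2$ the Hodge star even makes the gradient and divergence-free spectra coincide, so $\Delta_xK=-c_{\ell,\delta}K$ together with regularity and the value at $\theta=0$ admits (at least) a two-parameter family of solutions constrained by a single scalar condition. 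To pin down $K$ you must additionally impose $\div_xK(\cdot,y)=0$, which supplies the missing first-order relations between your two coefficient functions. As it stands, the proposal proves (i) and the first identity of (iii), but only reduces (ii) and the second identity of (iii) to the step where all the difficulty lives.
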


\begin{proof}
These equalities are taken from \cite[Propositions A.3--A.5]{FangZhang06}.
Here we omit their proofs to save space.
\end{proof}

Let $\{w_{\ell,k}(t);\ 1\leq k\leq D_\ell,\ \ell\geq 1\}$
be a family of real independent standard Brownian
motions defined on a probability space $(\Omega, \F,P)$. Define
  $$W(t,x)=\sum_{\ell\geq 1} \sqrt{\frac{db_\ell}{D_\ell}}
  \sum_{k=1}^{D_\ell}w_{\ell,k}(t)A_{\ell,k}(x),$$
where $\{b_\ell\}_{\ell\geq1}$ is a family of nonnegative constants satisfying $\sum_{\ell\geq1}
b_\ell<+\infty$. Then $W$ is an isotropic Gaussian vector field which is divergence free with the
covariance function $C$ given by (see \cite[pp. 852--853]{LeJan})
  \begin{equation}\label{covariance}
  C\big((x,u),(y,v)\big)=\phi(\cos\theta)\<u,v\>+\psi(\cos\theta)\<y,u\>\<x,v\>,
  \end{equation}
where $x,y\in S^d$, $u\in T_x S^d,v\in T_y S^d$ and $\cos\theta=\<x,y\>$. The functions $\phi$ and
$\psi$ are defined as
  $$\phi(t)=\sum_{\ell\geq1}b_\ell\bigg(t\gamma_\ell(t)-\frac{1-t^2}{d-1}\gamma'_\ell(t)\bigg),\quad
  \psi(t)=\sum_{\ell\geq1}b_\ell\bigg(-\gamma_\ell(t)-\frac{t}{d-1}\gamma'_\ell(t)\bigg).$$
For example, if $b_1=0$ and
  \begin{equation}\label{sect-2.1}
  b_\ell=\frac{b}{(\ell-1)^{1+\alpha}},\quad \ell\geq 2,
  \end{equation}
where $\alpha>0,\,b>0$ are two constants, then we have
  \begin{equation*}
  \sqrt{\frac{b_\ell}{D_\ell}}\sim O\bigg(\frac1{\ell^{(\alpha+d)/2}}\bigg),
  \end{equation*}
and $\{W(t)\}_{t\geq0}$ is a cylindrical Brownian motion in the Sobolev space $\G_V^{(\alpha+d)/2}$
of divergence free vector fields. The Sobolev embedding theorem asserts that if $\alpha>2$,
then $W(t)$ takes values in the space of $C^1$-vector fields. In this case, we can apply
Kunita's classical method \cite{Kunita90} to conclude that the following SDE
  \begin{equation}\label{sect-2.3}
  \d X_t= \sum_{\ell\geq 1}\sqrt{\frac{db_\ell}{D_\ell}}
  \sum_{k=1}^{D_\ell}A_{\ell,k}(X_t)\circ \d w_{\ell,k}(t)
  \end{equation}
generates a stochastic flow
of diffeomorphisms on $S^d$. In the critical case, i.e., $\alpha=2$, S. Fang and T. Zhang
\cite{FangZhang06} proved that \eqref{sect-2.3}
determines a flow of homeomorphisms on $S^d$. Since the vector fields
$A_{\ell,k}$ are divergence free, we see that $X_t$ preserves the volume measure
of the sphere $S^d$. Moreover, it was shown in \cite[Theorem 6.1]{FangLuo07} that
if $\theta_0\in C(S^d)$, then $\theta(t,x):=\theta_0\big(X^{-1}_t(x)\big)$ solves
the corresponding stochastic transport equation in the distributional sense.
This fact is closely related to the notion of generalized flows proposed in
\cite[Definition 2.7]{AAC13}.

We introduce the following notation which will be used later: define
  \begin{equation}\label{G-theta}
  G(\theta)=\sum_{\ell=1}^\infty b_\ell\, \gamma_{\ell}(\cos\theta),\quad \theta\in [0,\pi].
  \end{equation}
Then we have
  $$\phi(\cos\theta)=\cos\theta\, G(\theta)+\frac{\sin\theta}{d-1}G'(\theta),\quad
  \psi(\cos\theta)=-G(\theta)+\frac{\cot\theta}{d-1}G'(\theta).$$
In the critical case, i.e., $b_\ell$ is defined as in \eqref{sect-2.1} with $\alpha=2$,
it was shown in \cite[Proposition 2.2]{FangZhang06} that there exists some $C>0$ such that
  \begin{equation*}
  |G'(\theta)|\leq C\theta \log \frac{2\pi}\theta,\quad \theta\in[0,\pi].
  \end{equation*}
As mentioned at the beginning of \cite[Section 4]{AC12b}, there is no canonical choice
of the Brownian motion $W(t)$ in the space $\G_V^0$ of divergence free vector fields.
Throughout this paper, we shall fix a sequence $\{b_\ell\}_{\ell\geq 1}$ such that the function
$G\in C^2([0,\pi])$ and
  \begin{equation}\label{G-theta.1}
  |G'(\theta)|\leq C\theta,\quad \theta\in[0,\pi].
  \end{equation}
For example, this is the case if $b_\ell$ is chosen as in \eqref{sect-2.1} with $\alpha>2$.
The following result is a simple consequence of Lemma \ref{sect-2-lem-1}.

\begin{corollary}\label{sect-2-cor}
Let $\theta$ be the angle between $x,y\in S^d$. Then
  \begin{align*}
  \sum_{\ell=1}^\infty \frac{db_\ell}{D_\ell}\sum_{k=1}^{D_\ell}
  |A_{\ell,k}(x)-A_{\ell,k}(y)|_{\R^{d+1}}^2
  &=2d \big[G(0)-\cos\theta\, G(\theta)\big]-2\sin\theta\, G'(\theta),\cr
  \sum_{\ell=1}^\infty \frac{db_\ell}{D_\ell}\sum_{k=1}^{D_\ell}
  \big\<x-y,A_{\ell,k}(x)-A_{\ell,k}(y)\big\>_{\R^{d+1}}^2
  &=2\sin^2\theta \big[G(0)-G(\theta)\big].
  \end{align*}
\end{corollary}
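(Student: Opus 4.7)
The plan is to expand the squared norms pointwise, regroup the pieces so they match the sums that appear in Lemma \ref{sect-2-lem-1}, and then exchange summation over $\ell$ with the $k$-sum to recognize the functions $G$ and $G'$ defined in \eqref{G-theta}.

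For the first identity, I would expand
$$|A_{\ell,k}(x)-A_{\ell,k}(y)|_{\R^{d+1}}^2 = |A_{\ell,k}(x)|^2 + |A_{\ell,k}(y)|^2 - 2\<A_{\ell,k}(x),A_{\ell,k}(y)\>.$$
The first two terms are handled by applying part (ii) of Lemma \ref{sect-2-lem-1} in the degenerate case $y=x$ (so $\theta=0$), which gives $\frac{d}{D_\ell}\sum_k |A_{\ell,k}(x)|^2 = d\gamma_\ell(1) = d$, and similarly for $y$. For the cross term, I apply part (ii) of Lemma \ref{sect-2-lem-1} directly, obtaining $\frac{d}{D_\ell}\sum_k \<A_{\ell,k}(x),A_{\ell,k}(y)\> = d\cos\theta\,\gamma_\ell(\cos\theta) - \sin^2\theta\,\gamma_\ell'(\cos\theta)$. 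Summing $b_\ell$ times these expressions over $\ell$ and using $\sum_\ell b_\ell\gamma_\ell(\cos\theta) = G(\theta)$, $\sum_\ell b_\ell = G(0)$, and the chain-rule identity $G'(\theta) = -\sin\theta\sum_\ell b_\ell\gamma_\ell'(\cos\theta)$ (so $\sin^2\theta\sum_\ell b_\ell\gamma_\ell'(\cos\theta) = -\sin\theta\,G'(\theta)$), I collect the desired right-hand side $2d[G(0)-\cos\theta\,G(\theta)] - 2\sin\theta\,G'(\theta)$.

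For the second identity, the key observation is that $A_{\ell,k}(x)$ is tangent at $x$, so $\<x,A_{\ell,k}(x)\>_{\R^{d+1}} = 0$ (and likewise at $y$). Expanding yields
$$\<x-y,A_{\ell,k}(x)-A_{\ell,k}(y)\>_{\R^{d+1}} = -\bigl(\<A_{\ell,k}(x),y\>_{\R^{d+1}} + \<A_{\ell,k}(y),x\>_{\R^{d+1}}\bigr),$$
so upon squaring and applying the second formula in part (iii) of Lemma \ref{sect-2-lem-1} I get $\frac{d}{D_\ell}\sum_k \<x-y,A_{\ell,k}(x)-A_{\ell,k}(y)\>^2 = 2\sin^2\theta[1-\gamma_\ell(\cos\theta)]$. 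Multiplying by $b_\ell$ and summing over $\ell$ produces $2\sin^2\theta[G(0)-G(\theta)]$ as claimed.

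The only mild subtlety is justifying the termwise interchange of $\sum_\ell$ with the $k$-sum and the differentiation that produces $G'$. This is covered by the summability assumption $\sum_\ell b_\ell < \infty$ together with the standing hypothesis after \eqref{G-theta.1} that the sequence $\{b_\ell\}$ is chosen so that $G\in C^2([0,\pi])$; combined with $|\gamma_\ell|\le 1$ and the bound on $\gamma_\ell'$ implicit in Lemma \ref{sect-2-lem-1}(ii) applied at the endpoints, dominated convergence makes all the exchanges legitimate. No step is really hard; the content is simply that Lemma \ref{sect-2-lem-1}(ii)--(iii) are assembled with the definition of $G$.
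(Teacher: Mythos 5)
Your proof is correct and follows essentially the same route as the paper: expand the squares, evaluate the diagonal terms via Lemma \ref{sect-2-lem-1}(ii) at $\theta=0$ and the cross terms via parts (ii)--(iii), use the tangency relation $\<A_{\ell,k}(x),x\>=0$ for the second identity, and identify $G$ and $G'$ through the chain rule $G'(\theta)=-\sin\theta\sum_\ell b_\ell\gamma_\ell'(\cos\theta)$. The remarks on justifying the interchange of summation are a small addition beyond what the paper records, but the substance is identical.
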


\begin{proof}  We simply write $|\cdot|$ and $\<\cdot,\cdot\>$ for the Euclidean norm
and inner product in $\R^{d+1}$. By Lemma \ref{sect-2-lem-1}(ii), we have
  \begin{align*}
  \sum_{\ell=1}^\infty \frac{db_\ell}{D_\ell}\sum_{k=1}^{D_\ell}
  |A_{\ell,k}(x)-A_{\ell,k}(y)|^2
  &=\sum_{\ell=1}^\infty \frac{db_\ell}{D_\ell}\sum_{k=1}^{D_\ell}
  \big(|A_{\ell,k}(x)|^2+|A_{\ell,k}(y)|^2-2\<A_{\ell,k}(x),A_{\ell,k}(y)\>\big)\cr
  &=\sum_{\ell=1}^\infty b_\ell \big[2d\big(1-\cos\theta\, \gamma_\ell(\cos\theta)\big)
  +2\sin^2\theta\, \gamma'_\ell(\cos\theta)\big].
  \end{align*}
Then the first identity follows from the definition of $G(\theta)$. Next,
$\<A_{\ell,k}(x),x\>=0$ for any $x\in S^d$, therefore
  \begin{align*}
  \big\<x-y,A_{\ell,k}(x)-A_{\ell,k}(y)\big\>
  =-\<x, A_{\ell,k}(y)\>-\<y,A_{\ell,k}(x)\>.
  \end{align*}
As a result, Lemma \ref{sect-2-lem-1}(iii) implies
  \begin{align*}
  \sum_{\ell=1}^\infty \frac{db_\ell}{D_\ell}\sum_{k=1}^{D_\ell}
  \big\<x-y,A_{\ell,k}(x)-A_{\ell,k}(y)\big\>^2
  &= 2\sin^2\theta \sum_{\ell=1}^\infty b_\ell[1-\gamma_\ell(\cos\theta)]
  \end{align*}
from which we obtain the second identity.
\end{proof}

In the sequel, we shall denote the two functions on the right hand sides by $G_1(\theta)$
and $G_2(\theta)$, respectively. Since $G\in C^2([0,\pi])$ fulfils \eqref{G-theta.1},
these equalities imply that the quantities on the left hand sides vanish as
the distance $\theta=d(x,y)$ goes to 0.

In the rest of this section, we restrict ourselves to the two dimensional unit sphere $S^2$
and compute the expressions of Jacobi fields on it. Let $x,y\in S^2\,(\subset\R^3)$,
$x\neq \pm y$, and $\theta=d(x,y)$ the angle between them. Assume $X\in T_x S^2,\, Y\in T_y S^2$
which are seen as vectors in $\R^3$. Let $\gamma:[0,1]\to S^2$ be the minimal geodesic
from $x$ to $y$, and $J$ the Jacobi field along
$\gamma$ satisfying $J(0)=X$ and $J(1)=Y$. To compute the expression of $J$, we write $e(a)=\frac{\dot\gamma(a)}{|\dot\gamma(a)|}=\frac{\dot\gamma(a)}{\theta}$
for the tangent vector field of $\gamma$. In particular,
  \begin{equation}\label{sect-2-Jacobi.1}
  e(0)=\frac{y-\<y,x\> x}{|y-\<y,x\> x|}=\frac{y-(\cos\theta) x}{\sin\theta},\quad
  e(1)=\frac{(\cos\theta) y-x}{\sin\theta}.
  \end{equation}
It is clear that
  \begin{equation}\label{sect-2-Jacobi.2}
  \<e(0),e(1)\>=\<x,y\>=\cos\theta.
  \end{equation}
Set $N=\frac{x\times y}{\sin\theta}$ (here $\times$ is the vector product in $\R^3$);
then $T_{\gamma(a)} S^2=\textup{span}\{e(a),N\},\, a\in[0,1]$. The fact that the normal vector
field $N$ along $\gamma$ is independent on $a\in[0,1]$ is important for us, since we do not
need to parallel-transport tangent vectors which are normal to $\gamma$.
Now we can write $J(a)=J_1(a)e(a) + J_2(a)N$ and it remains to determine the coefficients
$J_1(a)$ and $J_2(a)$. It is well known that $J_1(a)$ is a linear function of $a\in [0,1]$.
Using the boundary values of $J$ and \eqref{sect-2-Jacobi.1}, it is easy to obtain
  \begin{equation}\label{sect-2-Jacobi.3}
  J_1(a)=\frac{1-a}{\sin\theta}\<X,y\>-\frac{a}{\sin\theta}\<Y,x\>.
  \end{equation}
Next, there exist two constants $\lambda,\mu\in\R$ such that $J_2(a)=\lambda\sin(a\theta)
+\mu\cos(a\theta)$. Letting $a=0$ and $a=1$, we get two equations:
  $$\mu=J_2(0)=\<X,N\>,\quad \lambda\sin\theta+\mu\cos\theta=J_2(1)=\<Y,N\>.$$
Solving them yields
  \begin{equation}\label{sect-2-Jacobi.4}
  J_2(a)=\frac{\sin(a\theta)}{\sin\theta}\<Y,N\>+\big[\cos(a\theta)-\cot\theta\sin(a\theta)\big]\<X,N\>.
  \end{equation}

\section{Stochastic Lagrangian flow on the group of homeomorphisms} \label{sect-SLF}

Given $u:[0,T]\to \G_V^0$, then for each $t\in[0,T]$, $u(t)$ is a divergence free
vector field on $S^d$. We consider the SDE
  \begin{align}\label{sect-3.1}
  \d g_u(t)&=\bigg(u(t)\,\d t+\sqrt{\frac{\nu}{c}}\circ \d W(t)\bigg)(g_u(t)),\quad g_u(0)=e,
  \end{align}
where $\nu>0$ is the viscosity of the fluid and $c=\frac 12\sum_{\ell\geq1}b_\ell$.
First we compute the generator of equation \eqref{sect-3.1}.

\begin{proposition}\label{sect-3-prop-1}
The infinitesimal generator of the process $g_u(t)$, when computed on the functions
$F(g)(x)=f(g(x)),\,f\in C^2(S^d)$, is given by
  $$\L F=\nu \Delta f+\<u,\nabla f\>.$$
\end{proposition}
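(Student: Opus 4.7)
The plan is to fix $x\in S^d$ and regard $g_u(t)(x)$ as an $S^d$-valued diffusion obtained by evaluating \eqref{sect-3.1} at the single point $x$. Since $F(g)(x)=f(g(x))$, the generator $\L F$ evaluated at $x$ is precisely the drift coefficient of $\d f(g_u(t)(x))$ in It\^o form, so the proof reduces to rewriting \eqref{sect-3.1} at $x$ in It\^o form and applying It\^o's formula.

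The deterministic drift $u(t,g_u(t)(x))$ contributes $\<u,\nabla f\>(g_u(t)(x))$ to the drift of $f(g_u(t)(x))$. For the noise part, converting each Stratonovich term $\sqrt{\nu/c}\sqrt{db_\ell/D_\ell}\,A_{\ell,k}(g_u(t)(x))\circ\d w_{\ell,k}(t)$ to It\^o form and combining with the It\^o second-order correction produces the additional drift
  $$\frac{\nu}{2c}\sum_{\ell\geq 1}\frac{db_\ell}{D_\ell}\sum_{k=1}^{D_\ell}\nabla_{A_{\ell,k}}\nabla_{A_{\ell,k}}f\bigl(g_u(t)(x)\bigr).$$
The remaining task is then to show that this second-order differential operator equals $\nu\Delta f$, where $\Delta$ denotes the Laplace--Beltrami operator of $S^d$.

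To prove the identity, I would first apply Lemma \ref{sect-2-lem-1}(i), which gives $\sum_k\nabla_{A_{\ell,k}}A_{\ell,k}=0$, so that
  $$\sum_{k=1}^{D_\ell}\nabla_{A_{\ell,k}}\nabla_{A_{\ell,k}}f=\sum_{k=1}^{D_\ell}\mathrm{Hess}(f)(A_{\ell,k},A_{\ell,k}).$$
Next, Lemma \ref{sect-2-lem-1}(iii) applied to $y=\cos\theta\,x+\sin\theta\,v$ for a unit tangent vector $v\in T_xS^d$ yields $\sum_k\<A_{\ell,k}(x),v\>^2=D_\ell/d$, which together with $A_{\ell,k}(x)\in T_xS^d$ is equivalent to the tangential isotropy identity $\sum_k A_{\ell,k}(x)\otimes A_{\ell,k}(x)=(D_\ell/d)\,\Pi_{T_xS^d}$. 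Tracing $\mathrm{Hess}(f)$ against this tensor gives $\sum_k\mathrm{Hess}(f)(A_{\ell,k},A_{\ell,k})=(D_\ell/d)\Delta f$, and then summing over $\ell$ while using $c=\tfrac12\sum_\ell b_\ell$ produces
  $$\frac{\nu}{2c}\sum_{\ell\geq 1}\frac{db_\ell}{D_\ell}\cdot\frac{D_\ell}{d}\Delta f=\frac{\nu}{2c}\sum_{\ell\geq 1}b_\ell\,\Delta f=\nu\Delta f,$$
as required. The computation is essentially routine; the only step requiring attention is the tangential isotropy identity, but as noted above it is an immediate consequence of Lemma \ref{sect-2-lem-1}, so no serious obstacle is anticipated.
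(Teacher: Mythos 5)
Your proposal is correct and follows essentially the same route as the paper: Stratonovich-to-It\^o conversion, cancellation of the $\nabla_{A_{\ell,k}}A_{\ell,k}$ terms via Lemma \ref{sect-2-lem-1}(i), and reduction of the remaining second-order term to $\frac{D_\ell}{d}\Delta f$. The only difference is that you derive the tangential isotropy identity $\sum_k A_{\ell,k}(x)\otimes A_{\ell,k}(x)=(D_\ell/d)\Pi_{T_xS^d}$ explicitly from Lemma \ref{sect-2-lem-1}(iii), whereas the paper delegates this step to the proof of Theorem 6.1 in the cited work of Fang and Luo; your derivation is valid.
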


\begin{proof}
The equation \eqref{sect-3.1} can be rewritten as
  \begin{align*}
  \d g_u(t,x)&=u(t,g_u(t,x))\,\d t+\sqrt{\frac{\nu}{c}}\sum_{\ell\geq1}\sqrt{\frac{db_\ell}{D_\ell}}
  \sum_{k=1}^{D_\ell} A_{\ell,k}(g_u(t,x))\circ \d w_{\ell,k}(t),\quad g_u(0,x)=x\in S^d.
  \end{align*}
For $f\in C^2(S^d)$, It\^o's formula yields (we write $g(t)$ instead of $g_u(t,x)$ to simplify notations)
  \begin{align*}
  \d f(g(t))&=(u(t)f)(g(t))\,\d t+\sqrt{\frac{\nu}{c}}\sum_{\ell\geq1}\sqrt{\frac{db_\ell}{D_\ell}}
  \sum_{k=1}^{D_\ell} (A_{\ell,k}f)(g(t))\circ \d w_{\ell,k}(t),
  \end{align*}
where $u(t)f$ and $A_{\ell,k}f$ are the Lie derivatives of $f$. Transforming the Stratonovich
differential into the It\^o differential, we get
  \begin{align*}
  \d f(g(t))&=(u(t)f)(g(t))\,\d t+\sqrt{\frac{\nu}{c}}\sum_{\ell\geq1}\sqrt{\frac{db_\ell}{D_\ell}}
  \sum_{k=1}^{D_\ell} (A_{\ell,k}f)(g(t))\, \d w_{\ell,k}(t)\cr
  &\hskip12pt +\frac\nu{2c}\sum_{\ell\geq1}\frac{db_\ell}{D_\ell}\sum_{k=1}^{D_\ell} A_{\ell,k}(A_{\ell,k}f)(g(t))\,\d t.
  \end{align*}
Since $A_{\ell,k}(A_{\ell,k}f)=\<\nabla_{A_{\ell,k}}A_{\ell,k},\nabla f\>+\<A_{\ell,k},\nabla_{A_{\ell,k}}\nabla f\>$,
by Lemma \ref{sect-2-lem-1} (see also the proof of \cite[Theorem 6.1]{FangLuo07}), the last term reduces to
  $$\frac\nu{2c}\sum_{\ell\geq1}b_\ell\, \Delta f(g(t))\,\d t=\nu \Delta f(g(t))\,\d t.$$
The proof is complete.
\end{proof}

Next, since $W(t)$ is an isotropic Gaussian vector field with covariance function given by
\eqref{covariance}, we can apply \cite[Theorem 3.2]{AC12b} to get the variational
principle for the Navier--Stokes equation on $S^d$. We omit it to save space.

As mentioned in the beginning of \cite[Section 3]{CiprianoCruzeiro07}, the well-posedness
of equation \eqref{sect-3.1} relies on the regularity of the Brownian motion $W(t)$ and
the drift $u(t)$, see \cite{Fang04} for related studies on the unit circle $S^1$.
Here we are concerned with the case where $u\in L^2([0,T];\G_V^1)$, that is, $u$ is
a time-dependent vector field which is divergence free and has $H^1$-spatial regularity.
This is motivated by the DiPerna--Lions theory which deals with the existence and uniqueness of
quasi-invariant flows associated to weakly differentiable vector fields. It has attracted
intensive attention in the past three decades (cf. \cite{DiPernaLions89, Ambrosio04,
CrippadeLellis}), and was extended in \cite{FangLiLuo} to non-compact manifolds
under suitable curvature conditions, see \cite{Zhang} for the study of Stratonovich SDE
on compact manifolds with Sobolev drift coefficient. In our setting, we have

\begin{theorem}[Stochastic Lagrangian flow]\label{DPL-flow}
Assume that the function $G(\theta)$ defined in \eqref{G-theta} is twice continuously
differentiable and $u\in L^2([0,T];\G_V^1)$. Then the SDE \eqref{sect-3.1} generates a
unique flow $g_u(t)$ of measurable homeomorphisms on $S^d$ which preserve the volume measure.
\end{theorem}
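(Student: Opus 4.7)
The plan is to run a DiPerna--Lions / Crippa--De Lellis style compactness argument, combining the smooth theory of stochastic flows with an approximation of the Sobolev drift. I would first mollify $u$ on $S^d$ to obtain smooth divergence-free vector fields $u_\eps$ with $u_\eps\to u$ in $L^2([0,T];\G_V^1)$. Because the hypothesis $G\in C^2$ places us strictly above the critical regularity of \cite{FangZhang06}, Kunita's classical method together with the estimates collected in Section \ref{sect-notations} provides, for every $\eps>0$, a smooth stochastic flow $g_\eps$ of volume-preserving diffeomorphisms solving \eqref{sect-3.1} with drift $u_\eps$.

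The heart of the argument is a quantitative Cauchy estimate between two such approximations $g_\eps$ and $g_\eta$. For fixed $x\in S^d$, set $\theta_t=d(g_\eps(t,x),g_\eta(t,x))$ and apply It\^o's formula to the logarithmic functional $\Phi_\delta(\theta)=\log(1+\theta^2/\delta^2)$. Corollary \ref{sect-2-cor} together with $|G'(\theta)|\leq C\theta$ yields $G_1(\theta),G_2(\theta)=O(\theta^2)$ as $\theta\to 0$, so the second-order and bracket contributions coming from the common noise $W(t)$ to $\d\Phi_\delta(\theta_t)$ are uniformly bounded in $\delta$. The drift contribution is, up to geometric corrections on the sphere, essentially proportional to $\frac{\theta_t}{\theta_t^2+\delta^2}|u_\eps(g_\eps(t,x))-u_\eta(g_\eta(t,x))|$, which I would split as $u_\eps(g_\eps)-u_\eps(g_\eta)$ plus $u_\eps(g_\eta)-u_\eta(g_\eta)$.

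For the first piece I would invoke the pointwise Lusin--Lipschitz bound $|u_\eps(p)-u_\eps(q)|\leq C\,d(p,q)\bigl(M|\nabla u_\eps|(p)+M|\nabla u_\eps|(q)\bigr)$, where $M$ denotes the Hardy--Littlewood maximal function on $S^d$, together with its $L^2$-boundedness and $\|u_\eps\|_{H^1}\leq\|u\|_{H^1}$; volume-preservation of $g_\eps,g_\eta$ is then used in the change of variables $\int_{S^d}(M|\nabla u_\eps|)(g_\eps(t,x))^2\,\d x\leq\|u\|_{H^1}^2$ after integration over $x$. The second piece is controlled directly by $\|u_\eps-u_\eta\|_{L^2_t H^1_x}\to 0$. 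Gronwall together with a Chebyshev argument then shows that $(g_\eps)$ is Cauchy in measure on $\Omega\times[0,T]\times S^d$, giving a limit $g_u$. Volume-preservation is a closed property under convergence in measure and passes to the limit. Running the same log-distance estimate between $g_u(t,x)$ and $g_u(t,y)$ for two distinct initial points, and a dual estimate with a negative power of $d$, simultaneously yields continuity and injectivity of $x\mapsto g_u(t,x)$; on the compact sphere these deliver the homeomorphism property. Uniqueness follows by applying the identical inequality between any two solutions sharing the same drift and initial condition.

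The main obstacle is closing the logarithmic estimate uniformly in $\delta$ under only $H^1$-regular drift. This requires simultaneously the sharp quadratic decay $G_1,G_2=O(\theta^2)$ of Corollary \ref{sect-2-cor}, so that the It\^o correction produced by the noise does not blow up as $\delta\to 0$, and the Hardy--Littlewood maximal-function bound used to absorb the drift; the latter hinges essentially on the volume-preservation of the approximating flows in order to change variables on $S^d$.
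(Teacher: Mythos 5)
Your existence and uniqueness argument is, in substance, the one the paper uses: the paper also approximates by smooth divergence-free fields (spectral truncation $u_n=\sum_{\ell\leq n}(1+c_{\ell,\delta})^{-1/2}\sum_k u_{\ell,k}(t)A_{\ell,k}$ rather than convolution, which on $S^d$ is the cleaner way to stay divergence free and to avoid the boundedness hypothesis of \cite{Zhang}), solves the truncated equations by Kunita's theory, and then invokes the Crippa--De Lellis type Cauchy estimate of \cite[Theorem 2.5]{Zhang} --- exactly the $\log(1+\theta^2/\delta^2)$ functional, the Lusin--Lipschitz maximal-function bound, and the change of variables justified by volume preservation that you describe. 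Your observation that $G_1,G_2=O(\theta^2)$ keeps the It\^o and bracket corrections bounded uniformly in $\delta$ is also the correct and necessary input from Corollary \ref{sect-2-cor}.

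The gap is in the final step, where you claim the homeomorphism property by ``running the same log-distance estimate between $g_u(t,x)$ and $g_u(t,y)$ for two distinct initial points,'' supplemented by a negative-moment estimate. This cannot be closed for a drift that is merely $H^1$: the inequality $|u(p)-u(q)|\leq C\,d(p,q)\bigl(M|\nabla u|(p)+M|\nabla u|(q)\bigr)$ is only exploitable after integrating over \emph{all} initial points and changing variables; for a fixed pair $(x,y)$ you have no control on $\int_0^t M|\nabla u|(g(s,x))\,\d s$, let alone the exponential integrability that a pointwise Gronwall argument or a negative-moment bound on $d(g_t(x),g_t(y))$ would require. Indeed, for $u\in L^2([0,T];\G_V^1)$ the limit flow need not be spatially continuous at all, and the theorem only asserts a \emph{measurable}, measure-preserving inverse. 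The paper obtains this by a different device: time reversal. It sets $w^{t_0}_{\ell,k}(s)=w_{\ell,k}(t_0-s)-w_{\ell,k}(t_0)$, solves the backward equation with drift $-u_n(t_0-s)$ to produce $g_n^{t_0}(t_0)=g_n(t_0)^{-1}$ at the approximate level, passes to the limit in the duality identity $\int_{S^d} f(g_n(t_0,x))h(x)\,\d x=\int_{S^d} f(y)h\bigl(g_n^{t_0}(t_0,y)\bigr)\,\d y$, and concludes with \cite[Lemma 4.3]{Zhang}. Some such explicit construction of the inverse flow is needed; the forward two-point estimate alone will not deliver invertibility.
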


\begin{proof}
The assertion can be proved by following the arguments of \cite[Theorem 2.5]{Zhang}. Since
$u\in L^2([0,T];\G_V^1)$, we have the decomposition below
  $$u(t,x)=\sum_{\ell\geq 1} (1+c_{\ell,\delta})^{-1/2}\sum_{k=1}^{D_\ell} u_{\ell,k}(t) A_{\ell, k}(x),$$
where $\{u_{\ell,k}:k=1,\ldots,D_\ell,\ell\geq 1\}\subset L^2([0,T])$ satisfies $\sum_{\ell\geq 1}
\sum_{k=1}^{D_\ell} \|u_{\ell,k}\|_{L^2([0,T])}<+\infty$. Note that we do not need the boundedness
of the vector field $u$ (unlike \cite[Theorem 2.5]{Zhang}), since, instead of approximating $u$
by convoluting it with a sequence of standard kernel (see \cite[Proposition 3.9]{Zhang}),
we can use the smooth approximations:
  $$u_n(t,x)=\sum_{\ell=1}^n (1+c_{\ell,\delta})^{-1/2}\sum_{k=1}^{D_\ell} u_{\ell,k}(t) A_{\ell, k}(x),\quad n\geq 1.$$
Similarly, we define
  $$W_n(t,x)=\sum_{\ell=1}^n \sqrt{\frac{db_\ell}{D_\ell}}
  \sum_{k=1}^{D_\ell}w_{\ell,k}(t)A_{\ell,k}(x)$$
and consider the SDE
  \begin{equation}\label{sect-3.2}
  \d g_n(t)=\bigg(u_n(t)\,\d t+\sqrt{\frac\nu c}\circ\d W_n(t)\bigg)(g_n(t)),\quad g_n(0)=e.
  \end{equation}
As the vector fields $u_n(t)$ and $W_n(t)$ are (a.s.) divergence free and smooth in the spatial
variable, by the classical results of Kunita \cite{Kunita90}, the above equation determines
a stochastic flow $g_n(t)$ of diffeomorphisms on $S^d$, leaving the volume measure invariant.
Following the proof of \cite[Theorem 2.5]{Zhang} we conclude that there exists a flow $g_u(t)$
of maps such that
  \begin{equation}\label{sect-3.3}
  \lim_{n\to\infty}\E\int_{S^d}\sup_{0\leq t\leq T} d^2\big(g_n(t,x),g_u(t,x)\big)\,\d x=0.
  \end{equation}
It is easy to show that the flow $g_u(t)$ preserves the volume measure of $S^d$ and solves SDE
\eqref{sect-3.1}.

To show that $g_u(t)$ admits a measurable inverse map, we fix any $t_0\in[0,T]$ and define
$w_{\ell,k}^{t_0}(s)=w_{\ell,k}(t_0-s)-w_{\ell,k}(t_0)$ for all $k=1,\ldots,D_\ell$ and $\ell\geq 1$.
Consider
  \begin{equation}\label{sect-3.4}
  \d g_n^{t_0}(s)=\bigg(-u_n(t_0-s)\,\d s+\sqrt{\frac\nu c}\circ\d W_n^{t_0}(s)\bigg) \big(g_n^{t_0}(s)\big),
  \quad g_n^{t_0}(0)=e,\ s\in[0,t_0],
  \end{equation}
where $W_n^{t_0}(s)=\sum_{\ell=1}^n \sqrt{\frac{db_\ell}{D_\ell}}\sum_{k=1}^{D_\ell}w_{\ell,k}^{t_0}(s)A_{\ell,k}$.
Then it is well known that, a.s., for all $s\in [0,t_0]$ and $x\in S^d$,
  $$g_n\big(s,g_n^{t_0}(t_0,x)\big)=g_n^{t_0}(t_0-s,x)\quad \mbox{and}\quad
  g_n^{t_0}\big(s,g_n(t_0,x)\big)=g_n(t_0-s,x).$$
In particular, letting $s=t_0$ yields
  \begin{equation}\label{sect-3.5}
  g_n\big(t_0,g_n^{t_0}(t_0,x)\big)=x=g_n^{t_0}\big(t_0,g_n(t_0,x)\big)\quad \mbox{for all } x\in S^d.
  \end{equation}
That is to say, $g_n^{-1}(t_0)=g_n^{t_0}(t_0)$. As in \eqref{sect-3.3}, there exists
a flow of maps $g^{t_0}(s)$ such that
  \begin{equation}\label{sect-3.6}
  \lim_{n\to\infty}\E\int_{S^d}\sup_{0\leq s\leq t_0} d^2\big(g_n^{t_0}(s,x),g^{t_0}(s,x)\big)\,\d x=0.
  \end{equation}
Now for any $f,h\in C(S^d)$, by \eqref{sect-3.5}, we have a.s.
  \begin{align*}
  \int_{S^d} f(g_n(t_0,x))h(x)\,\d x&=\int_{S^d} f(g_n(t_0,x)) h\big(g_n^{t_0}(t_0,g_n(t_0,x))\big)\,\d x\\
  &=\int_{S^d} f(y) h\big(g_n^{t_0}(t_0,y)\big)\,\d y,
  \end{align*}
where in the second equality we have used the invariance of the volume measure under the transform $g_n(t_0)$.
By \eqref{sect-3.3} and \eqref{sect-3.6}, letting $n\to\infty$ in the above identity implies that
for any $f,h\in C(S^d)$, it holds a.s.
  $$\int_{S^d} f(g(t_0,x))h(x)\,\d x=\int_{S^d} f(y) h\big(g^{t_0}(t_0,y)\big)\,\d y.$$
As the space $C(S^d)$ is separable, we conclude that, almost surely, the above equality holds for all
$f,h\in C(S^d)$. Then by \cite[Lemma 4.3]{Zhang} , we finish the proof.
\end{proof}

\section{The distance between two stochastic Lagrangian flows on the group $G_V^0$} \label{sect-distance}

In this section we consider the distance between two stochastic Lagrangian flows on the
group of volume-preserving homeomorphisms of the sphere $S^d$. We shall derive an
equation for the distance between two flows with smooth drift $u(t,x)$. Remark that
Arnaudon and Cruzeiro proved in \cite[Proposition 6.1]{AC12b} such an formula in the
general setting of compact Riemannian manifolds without boundary. Using
the particular properties of our vector fields $A_{\ell,k}$ (see Lemma \ref{sect-2-lem-1}),
we shall do some explicit computations. To avoid the difficulty of cut-locus,
we use the extrinsic distance on $S^d$.

Consider two flows on the group of homeomorphisms:
  \begin{equation}\label{sect-4.1}
  \d g_t(x)=u(t,g_t(x))\,\d t+\sqrt{\frac\nu c}\circ \d W(t,g_t(x)),\quad g_0=\varphi\in G_V^0
  \end{equation}
and
  \begin{equation}\label{sect-4.2}
  \d \tilde g_t(x)=u(t,\tilde g_t(x))\,\d t+\sqrt{\frac\nu c}\circ \d W(t,\tilde g_t(x)),
  \quad \tilde g_0=\psi\in G_V^0.
  \end{equation}
We fix an orthonormal basis $\{\theta_1,\ldots,\theta_{d+1}\}$ of $\R^{d+1}$, and for $1\leq i\leq d+1$,
let $\xi^i_t=\<\theta_i,g_t(x)\>$ and $\zeta^i_t=\<\theta_i,\tilde g_t(x)\>$.

\begin{lemma}\label{4-lem}
We have
  \begin{equation}\label{4-lem.1}
  \d\xi^i_t=\<\theta_i,u(t,g_t(x))\>\,\d t+\sqrt{\frac\nu c}\sum_{\ell=1}^\infty\sqrt{\frac{db_\ell}{D_\ell}}
  \sum_{k=1}^{D_\ell}\<\theta_i,A_{\ell,k}(g_t(x))\>\,\d w_{\ell,k}(t)-d\nu\xi^i_t\,\d t.
  \end{equation}
\end{lemma}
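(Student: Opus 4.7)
The plan is to recognise $\xi^i_t$ as the composition of a particularly simple scalar function with the flow, and then apply the It\^o calculation already carried out in the proof of Proposition \ref{sect-3-prop-1}. Specifically, I would set $f\colon S^d\to\R$ by $f(y)=\<\theta_i,y\>_{\R^{d+1}}$, so that $\xi^i_t=f(g_t(x))$. Because $f$ is the restriction to $S^d$ of a linear functional, its Lie derivative along any tangent vector $v\in T_yS^d\subset\R^{d+1}$ is simply $(vf)(y)=\<\theta_i,v\>$, which reduces the It\^o expansion to inner products in the ambient space.

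The first step is to rewrite \eqref{sect-4.1} in It\^o form and apply the scalar It\^o formula to $f(g_t(x))$, repeating verbatim the computation in the proof of Proposition \ref{sect-3-prop-1}. This yields
\[
\d f(g_t(x))=(u(t)f)(g_t(x))\,\d t+\sqrt{\frac\nu c}\sum_{\ell\geq 1}\sqrt{\frac{db_\ell}{D_\ell}}\sum_{k=1}^{D_\ell}(A_{\ell,k}f)(g_t(x))\,\d w_{\ell,k}(t)+\nu\Delta f(g_t(x))\,\d t,
\]
where Lemma \ref{sect-2-lem-1}(i) together with $c=\tfrac12\sum_\ell b_\ell$ is used, exactly as in Proposition \ref{sect-3-prop-1}, to collapse the Stratonovich-to-It\^o correction $\frac{\nu}{2c}\sum_\ell\frac{db_\ell}{D_\ell}\sum_k A_{\ell,k}(A_{\ell,k}f)$ to $\nu\Delta f$. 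Substituting $(u(t)f)(g_t(x))=\<\theta_i,u(t,g_t(x))\>$ and $(A_{\ell,k}f)(g_t(x))=\<\theta_i,A_{\ell,k}(g_t(x))\>$ then reproduces the first two terms of \eqref{4-lem.1}.

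The second step is to evaluate $\Delta f$ on $S^d$. Since $f$ is the restriction of a linear coordinate function on $\R^{d+1}$, it is a first-degree spherical harmonic; the standard relation between the ambient and spherical Laplacians applied to the trivial extension $y\mapsto\<\theta_i,y\>$ gives $\Delta f(y)=-d\<\theta_i,y\>=-df(y)$. Hence $\nu\Delta f(g_t(x))\,\d t=-d\nu\xi^i_t\,\d t$, which supplies the remaining term in \eqref{4-lem.1}.

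Since the whole argument is a direct specialisation of the It\^o calculation already performed in Proposition \ref{sect-3-prop-1}, I do not anticipate any genuine obstacle. The only point demanding care is the sign convention on $\Delta$: one must verify that the eigenvalue of the restricted coordinate function is $-d$, so that the correction term takes the form $-d\nu\xi^i_t\,\d t$ rather than $+d\nu\xi^i_t\,\d t$.
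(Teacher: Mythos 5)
Your proof is correct, but it is organised differently from the paper's. The paper does not pass through the generator: it keeps everything extrinsic, writes the Stratonovich term $\sqrt{\nu/c}\,\<\theta_i,\circ\,\d W(t,g_t(x))\>$ explicitly, introduces the projection $\Lambda_t=\theta_i-\<\theta_i,g_t(x)\>g_t(x)$ onto $T_{g_t(x)}S^d$, and computes the It\^o contraction of $\<\theta_i,A_{\ell,k}(g_t(x))\>$ against $\d w_{\ell,k}(t)$ by hand; the correction $-d\nu\xi^i_t\,\d t$ then falls out of Lemma \ref{sect-2-lem-1}(i) (which kills the $\sum_k\nabla_{A_{\ell,k}}A_{\ell,k}$ contribution), the identity $\sum_{k}|A_{\ell,k}(x)|^2=D_\ell$ from Lemma \ref{sect-2-lem-1}(ii) at $\theta=0$, and $2c=\sum_\ell b_\ell$. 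You instead observe that $\xi^i_t=f(g_t(x))$ with $f=\<\theta_i,\cdot\>|_{S^d}$, reuse the generator computation of Proposition \ref{sect-3-prop-1} wholesale, and close the argument with the spectral fact that degree-one spherical harmonics satisfy $\Delta f=-df$. Both routes are valid and rest on the same underlying identities (Proposition \ref{sect-3-prop-1} already invokes Lemma \ref{sect-2-lem-1}(i)); yours is shorter and more conceptual, while the paper's is self-contained at the price of redoing the Stratonovich-to-It\^o conversion in ambient coordinates. Your flagged caveat about the sign convention is the right one to worry about, and it is resolved in your favour: in Proposition \ref{sect-3-prop-1} the operator $\Delta$ appearing in the generator is the sum-of-squares (hence negative semi-definite) Laplace--Beltrami operator on functions, so the eigenvalue of a restricted linear coordinate is indeed $-d$, and as a cross-check this reproduces exactly the coefficient $-d\nu$ that the paper obtains by direct computation.
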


\begin{proof}
It\^o's formula yields
  \begin{equation}\label{4-lem.2}
  \d\xi^i_t=\big\<\theta_i,u(t,g_t(x))\big\>\,\d t
  +\sqrt{\frac\nu c}\big\<\theta_i,\circ\, \d W(t,g_t(x))\big\>.
  \end{equation}
We need to transform the second term
  $$\sqrt{\frac\nu c}\big\<\theta_i,\circ\, \d W(t,g_t(x))\big\>
  =\sqrt{\frac\nu c}\sum_{\ell\geq 1} \sqrt{\frac{db_\ell}{D_\ell}}\sum_{k=1}^{D_\ell}
  \big\<\theta_i,A_{\ell,k}(g_t(x))\big\> \circ \d w_{\ell,k}(t)$$
into the It\^o stochastic differential. Let $Q_x:\R^{d+1}\to T_x S^d$ be the orthogonal
projection and $\Lambda_t=Q_{g_t(x)}\theta_i$. We have
  $$\d\big\<\theta_i,A_{\ell,k}(g_t(x))\big\>=\d\big\<\Lambda_t,A_{\ell,k}(g_t(x))\big\>
  =\Big\<\frac{\rm D}{\d t}\Lambda_t,A_{\ell,k}(g_t(x))\Big\>
  +\Big\<\Lambda_t,\frac{\rm D}{\d t}A_{\ell,k}(g_t(x))\Big\>,$$
where $\frac{\rm D}{\d t}$ is the covariant derivative along $\{g_t(x)\}_{t\geq0}$.
First, the It\^o contraction
  $$\Big\<\Lambda_t,\frac{\rm D}{\d t}A_{\ell,k}(g_t(x))\Big\> \cdot \d w_{\ell,k}(t)
  =\sqrt{\frac{\nu db_\ell}{c D_\ell}} \big\<\Lambda_t,(\nabla_{A_{\ell,k}} A_{\ell,k})(g_t(x))\big\>\,\d t.$$
Next, noticing that $\Lambda_t=\theta_i-\<\theta_i,g_t(x)\> g_t(x)$, we have
  $$\d\Lambda_t\cdot \d w_{\ell,k}(t)=-\sqrt{\frac{\nu db_\ell}{c D_\ell}}
  \, \big[\big\<\theta_i,A_{\ell,k}(g_t(x))\big\> g_t(x)
  + \<\theta_i,g_t(x)\>A_{\ell,k}(g_t(x))\big]\,\d t.$$
Consequently,
  $$\Big\<\frac{\rm D}{\d t}\Lambda_t,A_{\ell,k}(g_t(x))\Big\> \cdot \d w_{\ell,k}(t)
  = -\sqrt{\frac{\nu db_\ell}{c D_\ell}}\<\theta_i,g_t(x)\> |A_{\ell,k}(g_t(x))|_{\R^{d+1}}^2\,\d t.$$
Recalling that $\xi^i_t=\<\theta_i,g_t(x)\>$, we obtain
  $$\d\big\<\theta_i,A_{\ell,k}(g_t(x))\big\> \cdot \d w_{\ell,k}(t)= \sqrt{\frac{\nu db_\ell}{c D_\ell}}
  \, \big[\big\<\Lambda_t,(\nabla_{A_{\ell,k}} A_{\ell,k})(g_t(x))\big\>
  -\xi^i_t |A_{\ell,k}(g_t(x))|_{\R^{d+1}}^2 \big]\d t.$$
Therefore by Lemma \ref{sect-2-lem-1}(i) and (ii),
  \begin{align*}
  \sqrt{\frac\nu c}\big\<\theta_i,\circ\, \d W(t,g_t(x))\big\>
  &=\sqrt{\frac\nu c}\sum_{\ell\geq 1} \sqrt{\frac{db_\ell}{D_\ell}}\sum_{k=1}^{D_\ell}
  \big\<\theta_i,A_{\ell,k}(g_t(x))\big\> \,\d w_{\ell,k}(t)
  -d\nu\xi^i_t \d t,
  \end{align*}
since $c=\frac12\sum_{\ell\geq 1} b_\ell$. Substituting this equality into \eqref{4-lem.2}
completes the proof.
\end{proof}

Replacing $g_t(x)$ with $\tilde g_t(x)$ in \eqref{4-lem.1}, we get the equation for $\zeta^i_t$. Therefore
  \begin{align*}
  \d(\xi^i_t-\zeta^i_t)&=\big\<\theta_i,u(t,g_t(x))-u(t,\tilde g_t(x))\big\>\,\d t
  -d\nu(\xi^i_t-\zeta^i_t)\,\d t\cr
  &\hskip12pt +\sqrt{\frac\nu c}\sum_{\ell=1}^\infty\sqrt{\frac{db_\ell}{D_\ell}}
  \sum_{k=1}^{D_\ell}\big\<\theta_i,A_{\ell,k}(g_t(x))-A_{\ell,k}(\tilde g_t(x))\big\>\,\d w_{\ell,k}(t).
  \end{align*}
The It\^o formula leads to
  \begin{align*}
  \d\big[(\xi^i_t-\zeta^i_t)^2\big]&=2(\xi^i_t-\zeta^i_t)\big\<\theta_i,u(t,g_t(x))-u(t,\tilde g_t(x))\big\>\,\d t
  -2d\nu(\xi^i_t-\zeta^i_t)^2\,\d t\cr
  &\hskip12pt +\frac\nu c\sum_{\ell=1}^\infty \frac{db_\ell}{D_\ell} \sum_{k=1}^{D_\ell}
  \big\<\theta_i,A_{\ell,k}(g_t(x))-A_{\ell,k}(\tilde g_t(x))\big\>^2\,\d t\cr
  &\hskip12pt +2\sqrt{\frac\nu c}\sum_{\ell=1}^\infty\sqrt{\frac{db_\ell}{D_\ell}}\sum_{k=1}^{D_\ell}
  (\xi^i_t-\zeta^i_t)\big\<\theta_i,A_{\ell,k}(g_t(x))-A_{\ell,k}(\tilde g_t(x))\big\>\,\d w_{\ell,k}(t).
  \end{align*}
Since $\{\theta_1,\ldots,\theta_{d+1}\}$ is an orthonormal basis of $\R^{d+1}$, summing from $i=1$
to $d+1$ gives us
  \begin{align*}
  \d|g_t(x)-\tilde g_t(x)|^2&=2\big\<g_t(x)-\tilde g_t(x),u(t,g_t(x))-u(t,\tilde g_t(x))\big\>\,\d t
  -2d\nu|g_t(x)-\tilde g_t(x)|^2\,\d t\cr
  &\hskip12pt +\frac\nu c\sum_{\ell=1}^\infty \frac{db_\ell}{D_\ell} \sum_{k=1}^{D_\ell}
  \big|A_{\ell,k}(g_t(x))-A_{\ell,k}(\tilde g_t(x))\big|^2\,\d t\cr
  &\hskip12pt +2\sqrt{\frac\nu c}\sum_{\ell=1}^\infty\sqrt{\frac{db_\ell}{D_\ell}}\sum_{k=1}^{D_\ell}
  \big\<g_t(x)-\tilde g_t(x),A_{\ell,k}(g_t(x))-A_{\ell,k}(\tilde g_t(x))\big\>\,\d w_{\ell,k}(t).
  \end{align*}
Denote by $\rho_t(x)=d(g_t(x),\tilde g_t(x))$ the intrinsic distance (or angle) between
$g_t(x),\tilde g_t(x)$, and $\beta_t(x)=|g_t(x)-\tilde g_t(x)|$ the extrinsic distance.
Using Corollary \ref{sect-2-cor} and recalling that $G_1(\theta)=2d
\big[G(0)-\cos\theta\, G(\theta)\big]- 2\sin\theta\, G'(\theta)$, we have
  \begin{align*}
  \d\beta_t^2(x)&=\bigg[2\big\<g_t(x)-\tilde g_t(x),u(t,g_t(x))-u(t,\tilde g_t(x))\big\>
  -2d\nu\beta_t^2(x)+\frac\nu c G_1(\rho_t(x))\bigg] \d t\cr
  &\hskip12pt +2\sqrt{\frac\nu c}\sum_{\ell=1}^\infty\sqrt{\frac{db_\ell}{D_\ell}}\sum_{k=1}^{D_\ell}
  \big\<g_t(x)-\tilde g_t(x),A_{\ell,k}(g_t(x))-A_{\ell,k}(\tilde g_t(x))\big\>\,\d w_{\ell,k}(t).
  \end{align*}

We define $\gamma_t=\big[\int_{S^d}\beta_t^2(x)\,\d x\big]^{1/2}$. Then integrating both sides of the above equality on $S^d$ leads to
  \begin{align*}
  \d\gamma_t^2&=\bigg[2\big\<g_t-\tilde g_t,u(t,g_t)-u(t,\tilde g_t)\big\>_{S^d}
  +\frac\nu c \int_{S^d}G_1(\rho_t(x))\,\d x-2d\nu \gamma_t^2\bigg]\d t\cr
  &\hskip12pt +2\sqrt{\frac\nu c}\sum_{\ell=1}^\infty\sqrt{\frac{db_\ell}{D_\ell}}\sum_{k=1}^{D_\ell}
  \big\<g_t-\tilde g_t,A_{\ell,k}(g_t)-A_{\ell,k}(\tilde g_t)\big\>_{S^d}\,\d w_{\ell,k}(t),
  \end{align*}
where $\<\cdot,\cdot\>_{S^d}$ denotes the inner product in $L^2(S^d,\d x)$.
We write $M(t)$ for the martingale part. Then It\^o's formula yields
  \begin{align*}
  \d\gamma_t&=\frac1{\gamma_t}\bigg[\big\<g_t-\tilde g_t,u(t,g_t)-u(t,\tilde g_t)\big\>_{S^d}
  +\frac\nu{2c} \int_{S^d}G_1(\rho_t(x))\,\d x-d\nu \gamma_t^2\bigg]\d t\cr
  &\hskip12pt +\frac1{2\gamma_t}\,\d M(t)-\frac{\nu}{2c\gamma_t^3}\sum_{\ell=1}^\infty \frac{db_\ell}{D_\ell}
  \sum_{k=1}^{D_\ell}\big\<g_t-\tilde g_t,A_{\ell,k}(g_t)-A_{\ell,k}(\tilde g_t)\big\>_{S^d}^2\,\d t.
  \end{align*}
As in \cite[Section 4]{AC12b}, we introduce the notations
  $$n_g(t)=\frac{g_t-\tilde g_t}{\gamma_t},\quad \delta u(t)=\frac{u(t,g_t)-u(t,\tilde g_t)}{\gamma_t}
  \quad\mbox{and}\quad \delta A_{\ell,k}(t)=\frac{A_{\ell,k}(g_t)-A_{\ell,k}(\tilde g_t)}{\gamma_t}.$$
Then the above equation can be reduced to
  \begin{align}\label{sect-4.3}
  \d\gamma_t&=\gamma_t\bigg[\frac1{2\gamma_t^2}\,\d M(t)+\Big(\big\<n_g(t),\delta u(t)\big\>_{S^d}-d\nu
  +\frac\nu{2c\gamma_t^2} \int_{S^d}G_1(\rho_t(x))\,\d x\cr
  &\hskip28pt -\frac{\nu}{2c}\sum_{\ell=1}^\infty \frac{db_\ell}{D_\ell}
  \sum_{k=1}^{D_\ell}\big\<n_g(t),\delta A_{\ell,k}(t)\big\>_{S^d}^2\Big)\d t\bigg].
  \end{align}
Using our notations, the martingale part can be rewritten as
  $$\d\tilde M(t):=\frac1{2\gamma_t^2}\,\d M(t)=\sqrt{\frac\nu c}\sum_{\ell=1}^\infty
  \sqrt{\frac{db_\ell}{D_\ell}}\sum_{k=1}^{D_\ell}\<n_g(t),\delta A_{\ell,k}(t)\>_{S^d}\,\d w_{\ell,k}(t).$$
Its quadratic variation is given by
  $$\d\<\tilde M\>(t)=\frac{\nu}c \sum_{\ell=1}^\infty\frac{db_\ell}{D_\ell}
  \sum_{k=1}^{D_\ell}\<n_g(t),\delta A_{\ell,k}(t)\>_{S^d}^2\,\d t.$$
By Cauchy's inequality and Corollary \ref{sect-2-cor}, we have
  \begin{align}\label{sect-4.4}
  \d\<\tilde M\>(t)&\leq \frac{\nu}{c\gamma_t^2} \sum_{\ell=1}^\infty\frac{db_\ell}{D_\ell}
  \sum_{k=1}^{D_\ell}\bigg[\int_{S^d}\big|A_{\ell,k}(g_t(x))-A_{\ell,k}(\tilde g_t(x))\big|^2\,\d x\bigg]\d t\cr
  &=\frac{\nu}{c\gamma_t^2} \bigg[\int_{S^d} G_1(\rho_t(x))\,\d x\bigg] \d t.
  \end{align}
Since $G_1(\theta)\leq C_0\theta^2$ for some $C_0>0$, we deduce from \eqref{sect-2.0} that
  \begin{equation*}
  \d\<\tilde M\>(t)\leq \frac{C_0 \nu\, \pi^2}{4c}\,\d t.
  \end{equation*}
Therefore we have proved the following result which is analogous to
\cite[Proposition 4.2]{AC12b}.

\begin{proposition}\label{sect-4-prop-1}
Let $g_t$ and $\tilde g_t$ be two flows defined by \eqref{sect-4.1} and \eqref{sect-4.2}, respectively.
Then the distance $\gamma_t=\big[\int_{S^d}|g_t(x)-\tilde g_t(x)|^2\,\d x\big]^{1/2}$ between them
satisfies
  \begin{align*}
  \d\gamma_t&=\gamma_t\big[\sigma_t\,\d z_t+b_t\,\d t+\big\<n_g(t),\delta u(t)\big\>_{S^d}\,\d t\big],
  \end{align*}
where $z_t$ is a real valued Brownian motion, $\sigma_t>0$ is given by
  \begin{align*}
  \sigma_t^2=\frac{\d\<\tilde M\>(t)}{\d t}=\frac{\nu}c \sum_{\ell=1}^\infty\frac{db_\ell}{D_\ell}
  \sum_{k=1}^{D_\ell}\<n_g(t),\delta A_{\ell,k}(t)\>_{S^d}^2
  \end{align*}
and
  \begin{equation*}
  b_t=-d\nu +\frac\nu{2c\gamma_t^2} \int_{S^d}G_1(\rho_t(x))\,\d x
  -\frac{1}{2}\sigma_t^2,
  \end{equation*}
where $\rho_t(x)=d(g_t(x),\tilde g_t(x))$ is the intrinsic distance.
\end{proposition}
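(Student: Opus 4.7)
The plan is to read the proposition directly off the preceding computation: equation \eqref{sect-4.3}, together with the formula for $\d\<\tilde M\>(t)/\d t$ and the uniform bound \eqref{sect-4.4}, already packages all of the stochastic calculus. What remains is to (i) regroup the drift inside the bracket of \eqref{sect-4.3} into the form $\gamma_t b_t\,\d t + \gamma_t\<n_g(t),\delta u(t)\>_{S^d}\,\d t$, and (ii) express the continuous local martingale $\tilde M(t)$ as an integral $\int_0^t \sigma_s\,\d z_s$ against a scalar Brownian motion.

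For (i), I would isolate the term containing $\delta u(t)$ in the bracket of \eqref{sect-4.3} and identify the remaining drift coefficient as
$$b_t = -d\nu + \frac{\nu}{2c\gamma_t^2}\int_{S^d} G_1(\rho_t(x))\,\d x - \frac{\nu}{2c}\sum_{\ell\geq 1}\frac{db_\ell}{D_\ell}\sum_{k=1}^{D_\ell}\<n_g(t),\delta A_{\ell,k}(t)\>_{S^d}^2.$$
Recognizing the last double sum as $\tfrac12\,\d\<\tilde M\>(t)/\d t = \tfrac12 \sigma_t^2$ collapses $b_t$ to the compact form stated in the proposition. For (ii), $\tilde M(t)$ is a continuous local martingale whose quadratic variation is absolutely continuous with density $\sigma_s^2$ uniformly bounded by $\nu C_0\pi^2/(4c)$ thanks to \eqref{sect-4.4}. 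Setting $z_t = \int_0^t \mathbf{1}_{\{\sigma_s>0\}}\sigma_s^{-1}\,\d\tilde M(s)$ and, on the complementary set $\{\sigma_s = 0\}$, filling in with an independent standard Brownian increment on a suitably enlarged probability space, L\'evy's characterization produces a real-valued Brownian motion $z_t$ with $\d\tilde M(t) = \sigma_t\,\d z_t$. Substituting into \eqref{sect-4.3} yields the claimed SDE for $\gamma_t$.

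The only delicate point is the treatment of the possibly degenerate set $\{\sigma_s = 0\}$, which is disposed of either by the enlargement above or, equivalently, by Dambis--Dubins--Schwarz realizing $\tilde M$ as a time-changed Brownian motion and inverting the time change. Apart from this, the argument is pure bookkeeping: the It\^o expansion of $\beta_t^2(x)$, integration in $x$ over $S^d$ to pass to $\gamma_t^2$, and a final It\^o application for $\gamma_t = \sqrt{\gamma_t^2}$ have all been carried out explicitly in the paragraphs preceding the proposition, so no new computation is required beyond matching coefficients.
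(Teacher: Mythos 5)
Your proposal is correct and follows essentially the same route as the paper: the paper's proof is precisely the chain of computations culminating in \eqref{sect-4.3}, after which the proposition is obtained by matching coefficients, identifying the last drift term with $-\tfrac12\sigma_t^2$, and writing $\d\tilde M(t)=\sigma_t\,\d z_t$. Your explicit treatment of the degenerate set $\{\sigma_s=0\}$ via L\'evy's characterization (or Dambis--Dubins--Schwarz) supplies a detail the paper leaves implicit, but it does not change the argument.
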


\iffalse
From the above proposition, we obtain
  \begin{equation}\label{sect-4.5}
  \rho_t=\rho_0 \exp\bigg(\int_0^t \sigma_s\,\d z_s+\int_0^t\Big[b_s-\frac12 \sigma_s^2
  +\big\<n_g(s),\delta u(s)\big\>_{S^d}\Big]\d s\bigg),
  \end{equation}
where $\rho_0=\big[\int_{S^d}|\varphi(x)-\psi(x)|^2\,\d x\big]^{1/2}$; moreover,
  $$b_s-\frac12 \sigma_s^2=-d\nu +\frac\nu{2c\rho_s^2} \int_{S^d}G_1(\theta_s(x))\,\d x
  -\sigma_s^2.$$
\fi

Unfortunately, it seems that there is no good method to estimate $\sigma_t^2$, except
using Cauchy's inequality. However, by \eqref{sect-4.4}, this leads to
  $$b_t\geq -d\nu.$$
As the right hand side is negative, we are unable to derive useful estimates
on $\rho_t$ from the above inequality (cf. \cite[Example 5.1]{AC12b}).

\section{The rotation process on $S^2$} \label{sect-rotation}

In this section, we consider the rotation of two particles $g_t(x)$ and $\tilde g_t(x)$
on the two dimensional sphere $S^2$ when their distance is small. In \cite[Lemma 7.1]{AC12b},
the authors established a formula for the covariant derivative of the rotation process.
To introduce this formula, we recall the notations in \cite[Sections 6 and 7]{AC12b}.
For simplicity, we denote by $x_t=g_t(x)$ and $y_t=\tilde g_t(x)$.
When $y_t$ is not in the cut-locus of $x_t$, let $[0,1]\ni a\mapsto
\gamma_a(x_t,y_t)$ be the minimal geodesic from $x_t$ to $y_t$. We denote by $T_a=T_a(t)
=\dot\gamma_a(x_t,y_t)$ and $\gamma_a(t)=\gamma_a(x_t,y_t)$. Let $e(t)\in T_{x_t} S^2$
be given by $e(t)=\frac{T_0(t)}{\rho_t(x)}$, where $\rho_t(x)=d(x_t,y_t)$ is the intrinsic
distance (or equivalently, the angle) between $x_t$ and $y_t$. Our purpose is to compute
the covariant derivative $\D e(t)$.

Fix $\ell\geq1$ and $k\in\{1,\ldots, D_\ell\}$. Let $J_{\ell,k}(a)$ be the Jacobi field
along $\gamma_a(t)$ satisfying $J_{\ell,k}(0)=A_{\ell,k}(x_t)$ and $J_{\ell,k}(1)=A_{\ell,k}(y_t)$.
Sometimes, we write $J_a(X,Y)$ for the Jacobi field along $\gamma_a(t)$ with $J_0(X,Y)
=X\in T_{x_t}S^2$ and $J_1(X,Y)=Y\in T_{y_t}S^2$.
Denote by $e_a(t)=\frac{T_a(t)}{\rho_t(x)}$ the unit tangent vector field and $N(t)=
\frac{x_t\times y_t}{\sin\rho_t(x)}$ the unit tangent vector normal to $\gamma_a(t)$.
We also write $u^N(t,x_t)$ for the part of $u(t,x_t)$ which is normal to the geodesic
$\gamma_a(t)$; moreover, $\d_m x_t^N$ is the martingale part normal to $\gamma_a(t)$.
Adapting \cite[Lemma 7.1]{AC12b} to our framework yields

\begin{lemma}\label{sect-5-lem-1}
It holds that
  \begin{align}\label{sect-5-lem-1.1}
  \hskip-20pt\D e(t)&=\frac1{\rho_t(x)}\dot J_0\big(\d_m x_t^N,\d_m y_t^N\big)
  +\frac1{\rho_t(x)}\dot J_0\big(u^N(t,x_t),u^N(t,y_t)\big)\,\d t\cr
  &\hskip13pt +\frac{\nu}{2c\rho_t(x)}\sum_{\ell\geq 1}\frac{2b_\ell}{D_\ell}\sum_{k=1}^{D_\ell}
  \big[\nabla_{T_0}\nabla_{J_{\ell,k}(0)}J_{\ell,k}(0)-R(T_0,J_{\ell,k}(0))J_{\ell,k}(0)\big]\,\d t\\
  &\hskip13pt -\frac{\nu}{2c\rho_t^2(x)}\bigg[\sum_{\ell\geq 1}\frac{2b_\ell}{D_\ell}\sum_{k=1}^{D_\ell}
  \int_0^1\!\! \big(\big|\nabla_{T_a} J_{\ell,k}(a)\big|^2
  -\big\<R(T_a,J_{\ell,k}(a))J_{\ell,k}(a),T_a\big\>\big)\,\d a\bigg]e(t)\,\d t,\nonumber
  \end{align}
where $\nu$ is the viscosity of the fluid and $c=\frac12 G(0)=\frac12\sum_{\ell\geq 1}b_\ell<\infty$.
\end{lemma}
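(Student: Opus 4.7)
The plan is to follow the derivation of \cite[Lemma 7.1]{AC12b} verbatim, adapting only the specific normalization constants and exploiting the identities of Lemma \ref{sect-2-lem-1} that are particular to our vector fields $A_{\ell,k}$. The starting point is the variational identity for geodesics: if $x_t$ and $y_t$ move by infinitesimal variations $\delta x \in T_{x_t}S^2$ and $\delta y \in T_{y_t}S^2$, then the Jacobi field $J(\delta x, \delta y)$ along $\gamma_a(t)$ computes the variation of $\gamma_a$, and its derivative at $a=0$ gives the variation of the initial tangent vector $T_0(t)$.

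First I would write the semimartingale decompositions of $x_t$ and $y_t$ as Stratonovich SDEs driven by $u(t,\cdot)\,\d t$ and $\sqrt{\nu/c}\sum_{\ell,k}\sqrt{db_\ell/D_\ell}\,A_{\ell,k}\circ\d w_{\ell,k}$. Applying the Jacobi-field variation to $T_0(t)$ yields, in Stratonovich form,
\begin{align*}
\D T_0(t) &= \dot J_0\bigl(\circ\d x_t,\circ\d y_t\bigr)
= \dot J_0\bigl(u(t,x_t),u(t,y_t)\bigr)\,\d t
+ \sqrt{\tfrac{\nu}{c}}\sum_{\ell,k}\sqrt{\tfrac{db_\ell}{D_\ell}}\,\dot J_0\bigl(A_{\ell,k}(x_t),A_{\ell,k}(y_t)\bigr)\circ\d w_{\ell,k}.
\end{align*}
The tangential component of each input only affects the length $\rho_t(x)$, not the direction $e(t)$, so after projecting onto the normal direction at the endpoints I obtain the first line of \eqref{sect-5-lem-1.1} directly.

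Next I would convert the Stratonovich martingale into Itô form. The correction splits into two pieces according to which endpoint is being differentiated. Differentiating the coefficient $\dot J_0(A_{\ell,k}(x_t),A_{\ell,k}(y_t))$ against itself using the standard second-variation formula for geodesics produces, per fixed $\ell$, two contributions: a boundary term involving $\sum_k \nabla_{A_{\ell,k}}A_{\ell,k}$ at $x_t$ and $y_t$, which vanishes by Lemma \ref{sect-2-lem-1}(i), and an intrinsic term giving the second variation $\nabla_{T_0}\nabla_{J_{\ell,k}(0)}J_{\ell,k}(0) - R(T_0,J_{\ell,k}(0))J_{\ell,k}(0)$. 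Summing over $k$ and $\ell$ against $\frac{\nu}{c}\cdot\frac{db_\ell}{D_\ell}$ and including the factor $\frac12$ from Itô-Stratonovich conversion produces the $\frac{\nu}{2c\rho_t(x)}\sum_\ell\frac{2b_\ell}{D_\ell}$ prefactor of the second line (the factor $2$ comes from symmetrizing the two endpoints).

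Finally, to go from $\D T_0(t)$ to $\D e(t)=\D(T_0/\rho_t(x))$ I would apply the manifold Itô formula, which produces the last line: the quadratic variation of $\rho_t(x)$ is computed from the energy formula $\rho_t(x)^2 = \int_0^1|T_a|^2\,\d a$ together with the second-variation-of-arclength identity $\int_0^1(|\nabla_{T_a}J|^2 - \langle R(T_a,J)J,T_a\rangle)\,\d a$, giving precisely the coefficient of $e(t)$ in \eqref{sect-5-lem-1.1}. The main obstacle is purely bookkeeping: matching normalizations $\sqrt{\nu/c}\cdot\sqrt{db_\ell/D_\ell}$ in the diffusion coefficients, keeping track of the factor of $2$ arising from symmetrizing the two endpoints (which is why $2b_\ell/D_\ell$ appears rather than $b_\ell/D_\ell$), and restricting to times before $y_t$ enters the cut-locus of $x_t$ so that the geodesic $\gamma_a(t)$ and its Jacobi fields are well defined. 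Once those constants are verified, the statement reduces to \cite[Lemma 7.1]{AC12b} with $\sum_k \nabla_{A_{\ell,k}}A_{\ell,k}=0$ eliminating the only term that would otherwise require further work.
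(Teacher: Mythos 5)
Your proposal matches the paper's treatment: the paper offers no independent proof of this lemma, stating only that it is \cite[Lemma 7.1]{AC12b} adapted to the present setting, and your reconstruction of that derivation (Jacobi-field variation of the geodesic, Stratonovich-to-It\^o conversion with the $\sum_k\nabla_{A_{\ell,k}}A_{\ell,k}$ terms killed by Lemma \ref{sect-2-lem-1}(i), and the index-form correction arising from dividing $T_0$ by $\rho_t(x)$) is structurally sound. One bookkeeping correction: the factor $2$ in $2b_\ell/D_\ell$ is simply $d=2$ for the sphere $S^2$ (the diffusion coefficient is $\sqrt{\nu/c}\,\sqrt{db_\ell/D_\ell}$, so squaring and multiplying by the It\^o $\tfrac12$ already yields $\tfrac{\nu}{2c}\cdot\tfrac{2b_\ell}{D_\ell}$), not a symmetrization over the two endpoints; inserting an extra symmetrization factor on top of $d$ would leave you off by a factor of $2$.
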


We want to get explicit expressions of the terms in the above formula.
Compared to \cite[Proposition 7.2]{AC12b} which deals with the two dimensional flat torus,
the computations given below are more complicated due to the presence of the curvature.
First, by the discussions at the end of Section 2, $J_a\big(\d_m x_t^N,\d_m y_t^N\big)$
is a normal Jacobi field with coefficient
  $$\frac{\sin(a\rho_t(x))}{\sin\rho_t(x)}\<\d_m y_t,N(t)\>
  +\big[\cos(a\rho_t(x))-\cot\rho_t(x)\sin(a\rho_t(x))\big]\<\d_m x_t,N(t)\>.$$
Hence
  \begin{equation}\label{sect-5-term-1}
  \dot J_0\big(\d_m x_t^N,\d_m y_t^N\big)=\frac{\rho_t(x)}{\sin\rho_t(x)}
  \big(\<\d_m y_t,N(t)\>-\cos\rho_t(x)\<\d_m x_t,N(t)\>\big)N(t).
  \end{equation}
By the definitions of $\d_m x_t$ and $\d_m y_t$, we obtain
  \begin{equation}\label{sect-5-term-1.1}
  \begin{split}
  &\hskip13pt \frac1{\rho_t(x)}\dot J_0\big(\d_m x_t^N,\d_m y_t^N\big)\cr
  &=\frac{N(t)}{\sin\rho_t(x)}\sqrt{\frac\nu c}
  \sum_{\ell\geq 1}\sqrt{\frac{2b_\ell}{D_\ell}}\sum_{k=1}^{D_\ell}
  \big(\<A_{\ell,k}(y_t),N(t)\>-\<A_{\ell,k}(x_t),N(t)\>\cos\rho_t(x)\big)\,\d w_{\ell,k}(t).
  \end{split}
  \end{equation}
To compute the quadratic
variation of the above process, we have to deal with the
quantity involving $\<A_{\ell,k}(x_t),N(t)\>$. We collect the corresponding
results in the following lemma.

\begin{lemma}\label{sect-5-lem-2}
We have for any $\ell\geq 1$,
  \begin{align*}
  \frac2{D_\ell}\sum_{k=1}^{D_\ell}\<A_{\ell,k}(x_t),N(t)\>^2
  &=\frac2{D_\ell}\sum_{k=1}^{D_\ell}\<A_{\ell,k}(y_t),N(t)\>^2=1;\cr
  \frac2{D_\ell}\sum_{k=1}^{D_\ell}\<A_{\ell,k}(x_t),N(t)\>\<A_{\ell,k}(y_t),N(t)\>
  &=\cos\rho_t(x)\, \gamma_\ell(\cos\rho_t(x))-\sin^2\rho_t(x)\, \gamma'_\ell(\cos\rho_t(x)).
  \end{align*}
\end{lemma}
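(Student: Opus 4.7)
The plan is to exploit the fact that on $S^2$ the tangent space $T_{x_t}S^2$ is two-dimensional, so $\{e(0),N(t)\}$ is an orthonormal basis of it, and likewise $\{e(1),N(t)\}$ is an orthonormal basis of $T_{y_t}S^2$ (here $e(0),e(1)$ denote $e_a(t)$ at $a=0,1$). By \eqref{sect-2-Jacobi.1} together with $\<A_{\ell,k}(x_t),x_t\>=\<A_{\ell,k}(y_t),y_t\>=0$, the tangential components in $\R^3$ are
$$\<A_{\ell,k}(x_t),e(0)\>=\frac{\<A_{\ell,k}(x_t),y_t\>}{\sin\rho_t(x)},\qquad \<A_{\ell,k}(y_t),e(1)\>=-\frac{\<A_{\ell,k}(y_t),x_t\>}{\sin\rho_t(x)}.$$
Via these projections, every sum that appears will be reduced to moments already known from Lemma \ref{sect-2-lem-1}.

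For the first identity, I would apply Pythagoras in $T_{x_t}S^2$, so $|A_{\ell,k}(x_t)|^2=\<A_{\ell,k}(x_t),e(0)\>^2+\<A_{\ell,k}(x_t),N(t)\>^2$, and sum over $k$. Lemma \ref{sect-2-lem-1}(ii) with $y=x$ (giving $\theta=0$ and $\gamma_\ell(1)=1$) yields $\sum_k|A_{\ell,k}(x_t)|^2=D_\ell$, while the first identity of Lemma \ref{sect-2-lem-1}(iii) with $d=2$ gives $\sum_k\<A_{\ell,k}(x_t),y_t\>^2=(D_\ell/2)\sin^2\rho_t(x)$, hence $\sum_k\<A_{\ell,k}(x_t),e(0)\>^2=D_\ell/2$. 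Subtracting produces $\sum_k\<A_{\ell,k}(x_t),N(t)\>^2=D_\ell/2$; the same argument based on $\{e(1),N(t)\}$ at $y_t$ gives the matching claim.

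For the second identity, I would decompose both vectors in their respective orthonormal frames and expand the $\R^3$ inner product. Using $\<e(0),e(1)\>=\cos\rho_t(x)$ from \eqref{sect-2-Jacobi.2} together with the fact that $N(t)$ is orthogonal to $e(0)$ inside $T_{x_t}S^2$ and to $e(1)$ inside $T_{y_t}S^2$, this gives
$$\<A_{\ell,k}(x_t),A_{\ell,k}(y_t)\>=\cos\rho_t(x)\<A_{\ell,k}(x_t),e(0)\>\<A_{\ell,k}(y_t),e(1)\>+\<A_{\ell,k}(x_t),N(t)\>\<A_{\ell,k}(y_t),N(t)\>.$$
Summing over $k$, the left side is evaluated by Lemma \ref{sect-2-lem-1}(ii) with $d=2$, and after inserting the tangential formulas above the first product on the right reduces to a multiple of $\sum_k\<A_{\ell,k}(x_t),y_t\>\<A_{\ell,k}(y_t),x_t\>$. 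This last sum is extracted by expanding the square in the second identity of Lemma \ref{sect-2-lem-1}(iii) and subtracting the two diagonal terms (each equal to $(D_\ell/2)\sin^2\rho_t(x)$ by the first identity of (iii)); it equals $-(D_\ell/2)\sin^2\rho_t(x)\gamma_\ell(\cos\rho_t(x))$. Solving the displayed equation for the $N$-cross sum then yields the stated formula. The argument has no real obstacle; the only care needed is bookkeeping — tracking the minus sign that enters through $e(1)$ and keeping the factor $d=2$ consistent throughout.
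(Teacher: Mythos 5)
Your proposal is correct and follows essentially the same route as the paper's proof: Pythagoras in the orthonormal frames $\{e(0),N\}$, $\{e(1),N\}$ for the first identity, and the frame decomposition of $\<A_{\ell,k}(x_t),A_{\ell,k}(y_t)\>$ combined with extracting $\sum_k\<A_{\ell,k}(x_t),y_t\>\<A_{\ell,k}(y_t),x_t\>$ from Lemma \ref{sect-2-lem-1}(iii) for the second. The sign through $e(1)$ and the constants all check out.
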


\begin{proof}
We first list some useful identities. By \eqref{sect-2-Jacobi.1}, we have
  $$e_0(t)=e(t)=\frac{y_t-(\cos\rho_t(x)) x_t}{\sin\rho_t(x)}\quad\mbox{and}\quad
  e_1(t)=\frac{(\cos\rho_t(x)) y_t-x_t}{\sin\rho_t(x)}.$$
As a result, $\<e_0(t),e_1(t)\>=\cos\rho_t(x)$ and
  \begin{align*}
  &\<A_{\ell,k}(x_t),e_0(t)\>=\frac{\<A_{\ell,k}(x_t),y_t\>}{\sin\rho_t(x)},\quad
  \<A_{\ell,k}(x_t),e_1(t)\>=\cot\rho_t(x) \<A_{\ell,k}(x_t),y_t\>,\cr
  &\<A_{\ell,k}(y_t),e_0(t)\>=-\cot\rho_t(x) \<A_{\ell,k}(y_t),x_t\>,\quad
  \<A_{\ell,k}(y_t),e_1(t)\>=-\frac{\<A_{\ell,k}(y_t),x_t\>}{\sin\rho_t(x)}.
  \end{align*}

Now, since $\{e_0(t),N(t)\}$ is an orthonormal basis of $T_{x_t}S^2$, one has
  \begin{align*}
  \<A_{\ell,k}(x_t),N(t)\>^2&=|A_{\ell,k}(x_t)|^2-\<A_{\ell,k}(x_t),e_0(t)\>^2
  =|A_{\ell,k}(x_t)|^2-\frac{\<A_{\ell,k}(x_t),y_t\>^2}{\sin^2\rho_t(x)},
  \end{align*}
hence by Lemma \ref{sect-2-lem-1}(ii) and (iii),
  \begin{align*}
  \frac2{D_\ell}\sum_{k=1}^{D_\ell}\<A_{\ell,k}(x_t),N(t)\>^2
  &=\frac2{D_\ell}\sum_{k=1}^{D_\ell}|A_{\ell,k}(x_t)|^2
  -\frac{1}{\sin^2\rho_t(x)}\cdot \frac2{D_\ell}\sum_{k=1}^{D_\ell}\<A_{\ell,k}(x_t),y_t\>^2\cr
  &=2-\frac{1}{\sin^2\rho_t(x)}\sin^2\rho_t(x)=1.
  \end{align*}
In the same way, we can prove the equality involving $\<A_{\ell,k}(y_t),N(t)\>^2$.

Next we denote by $I_{\ell,k}=\<A_{\ell,k}(x_t),N(t)\>\<A_{\ell,k}(y_t),N(t)\>$. Then
  \begin{align*}
  I_{\ell,k}&=\big\<\<A_{\ell,k}(x_t),N(t)\>N(t),\<A_{\ell,k}(y_t),N(t)\>N(t)\big\>\cr
  &=\big\<A_{\ell,k}(x_t)-\<A_{\ell,k}(x_t),e_0(t)\>e_0(t),A_{\ell,k}(y_t)-\<A_{\ell,k}(y_t),e_1(t)\>e_1(t)\big\>\cr
  &=\<A_{\ell,k}(x_t),A_{\ell,k}(y_t)\>-\<A_{\ell,k}(y_t),e_1(t)\>\<A_{\ell,k}(x_t),e_1(t)\>\cr
  &\hskip13pt -\<A_{\ell,k}(x_t),e_0(t)\> \<A_{\ell,k}(y_t),e_0(t)\>
  +\<A_{\ell,k}(x_t),e_0(t)\> \<A_{\ell,k}(y_t),e_1(t)\> \<e_0(t),e_1(t)\>.
  \end{align*}
Using the identities given at the beginning of the proof, we get
  \begin{align*}
  I_{\ell,k}&=\<A_{\ell,k}(x_t),A_{\ell,k}(y_t)\>+\frac{\cos\rho_t(x)}{\sin^2\rho_t(x)}
  \<A_{\ell,k}(x_t),y_t\>\<A_{\ell,k}(y_t),x_t\>.
  \end{align*}
Since
  $$2\<A_{\ell,k}(x_t),y_t\>\<A_{\ell,k}(y_t),x_t\>=\big(\<A_{\ell,k}(x_t),y_t\>+\<A_{\ell,k}(y_t),x_t\>\big)^2
  -\<A_{\ell,k}(x_t),y_t\>^2-\<A_{\ell,k}(y_t),x_t\>^2,$$
thus by Lemma \ref{sect-2-lem-1}(iii),
  \begin{align*}
  \frac2{D_\ell}\sum_{k=1}^{D_\ell}\<A_{\ell,k}(x_t),y_t\>\<A_{\ell,k}(y_t),x_t\>
  &=\frac12\big[2\sin^2\rho_t(x)\big(1-\gamma_\ell(\cos\rho_t(x))\big)-\sin^2\rho_t(x)-\sin^2\rho_t(x)\big]\cr
  &=-\sin^2\rho_t(x)\,\gamma_\ell(\cos\rho_t(x)).
  \end{align*}
Consequently, Lemma \ref{sect-2-lem-1}(ii) leads to
  $$\frac2{D_\ell}\sum_{k=1}^{D_\ell}I_{\ell,k}=\cos\rho_t(x)\gamma_\ell(\cos\rho_t(x))
  -\sin^2\rho_t(x)\,\gamma'_\ell(\cos\rho_t(x)).$$
The proof is complete.
\end{proof}

We denote by $\d\xi_t=\frac1{\rho_t(x)}\dot J_0\big(\d_m x_t^N,\d_m y_t^N\big)$ to save notations.
Then by \eqref{sect-5-term-1.1} and Lemma \ref{sect-5-lem-2},
  \begin{align*}
  \d\<\xi,\xi\>_t&=\frac{\nu}{c\sin^2\rho_t(x)}\sum_{\ell\geq 1}\frac{2b_\ell}{D_\ell}\sum_{k=1}^{D_\ell}
  \big(\<A_{\ell,k}(y_t),N(t)\>-\<A_{\ell,k}(x_t),N(t)\>\cos\rho_t(x)\big)^2\,\d t\cr
  &=\frac{\nu}{c\sin^2\rho_t(x)}\sum_{\ell\geq 1}b_\ell \big\{1+\cos^2\rho_t(x)-2\cos\rho_t(x)\cr
  &\hskip40pt \times \big[\cos\rho_t(x)\gamma_\ell(\cos\rho_t(x))
  -\sin^2\rho_t(x)\,\gamma'_\ell(\cos\rho_t(x))\big]\big\}\d t,
  \end{align*}
which, together with the definition \eqref{G-theta} of the function $G(\theta)$, gives us
  \begin{align}\label{sect-5-term-1.2}
  \d\<\xi,\xi\>_t
  &=\frac{\nu}{c}\bigg\{\frac{1+\cos^2\rho_t(x)}{\sin^2\rho_t(x)}G(0)-2\cot^2\rho_t(x)\, G(\rho_t(x))
  -2\cot\rho_t(x)\, G'(\rho_t(x))\bigg\}\d t\cr
  &= \bigg\{2\nu+\frac{2\nu}c \cot^2\rho_t(x)\,[G(0)-G(\rho_t(x))]-\frac{2\nu}c\cot\rho_t(x)\, G'(\rho_t(x))\bigg\}\,\d t.
  \end{align}

Analogous to \eqref{sect-5-term-1}, the second term on the right hand side of \eqref{sect-5-lem-1.1} is given by
  \begin{equation}\label{sect-5-term-2}
  \frac1{\rho_t(x)}\dot J_0\big(u^N(t,x_t),u^N(t,y_t)\big)
  =\frac{N(t)}{\sin\rho_t(x)}\big(\<u(t,y_t),N(t)\>-\<u(t,x_t),N(t)\> \cos\rho_t(x)\big).
  \end{equation}

Next we compute the Jacobi field $J_{\ell,k}(a)$ which are needed for treating the last
two terms in \eqref{sect-5-lem-1.1}. By the discussions at the end of Section 2, we have $J_{\ell,k}(a)=J_{\ell,k}^{(1)}(a)e_a(t)+J_{\ell,k}^{(2)}(a)N(t)$, where
  \begin{align*}
  J_{\ell,k}^{(1)}(a)&=\frac{1-a}{\sin\rho_t(x)}\<A_{\ell,k}(x_t), y_t\>
  -\frac{a}{\sin\rho_t(x)}\<A_{\ell,k}(y_t), x_t\>,\cr
  J_{\ell,k}^{(2)}(a)&=\frac{\sin(a\rho_t(x))}{\sin\rho_t(x)}\<A_{\ell,k}(y_t),N(t)\>
  +\big[\cos(a\rho_t(x))-\cot\rho_t(x)\sin(a\rho_t(x))\big]\<A_{\ell,k}(x_t),N(t)\>.
  \end{align*}
It is known that $\nabla_{T_0}\nabla_{J_{\ell,k}(0)}J_{\ell,k}(0)$ vanishes (see also \cite[p.372]{AC12a}),
so we now consider the term $R(T_0,J_{\ell,k}(0))J_{\ell,k}(0)$. As $S^2$ has constant sectional curvature
1, we have
  $$R(T_0,J_{\ell,k}(0))J_{\ell,k}(0)=-\<T_0,J_{\ell,k}(0)\>J_{\ell,k}(0)
  +\<J_{\ell,k}(0),J_{\ell,k}(0)\>T_0.$$
Next, since $T_0=\rho_t(x)e(t)$ and $J_{\ell,k}(0)=\frac{1}{\sin\rho_t(x)}\<A_{\ell,k}(x_t), y_t\>e(t)
+\<A_{\ell,k}(x_t),N(t)\> N(t)$, we obtain
  \begin{equation*}
  \begin{split}
  R(T_0,J_{\ell,k}(0))J_{\ell,k}(0)&=-\frac{\rho_t(x)}{\sin\rho_t(x)}\<A_{\ell,k}(x_t), y_t\>
  \bigg[\frac{\<A_{\ell,k}(x_t), y_t\>}{\sin\rho_t(x)}e(t)+\<A_{\ell,k}(x_t),N(t)\> N(t)\bigg]\cr
  &\hskip13pt +\rho_t(x)\bigg[\frac{\<A_{\ell,k}(x_t), y_t\>^2}{\sin^2\rho_t(x)}+\<A_{\ell,k}(x_t),N(t)\>^2\bigg]e(t)\cr
  &=\rho_t(x)\<A_{\ell,k}(x_t),N(t)\>^2 e(t)
  -\frac{\rho_t(x)}{\sin\rho_t(x)}\<A_{\ell,k}(x_t), y_t\>\<A_{\ell,k}(x_t),N(t)\>N(t).
  \end{split}
  \end{equation*}
By Lemma \ref{sect-5-lem-2}, we have
  \begin{equation*}
  \frac2{D_\ell}\sum_{k=1}^{D_\ell}R(T_0,J_{\ell,k}(0))J_{\ell,k}(0)
  =\rho_t(x)e(t)-\frac{2\rho_t(x)}{D_\ell}\sum_{k=1}^{D_\ell}
  \<A_{\ell,k}(x_t), e_0(t)\>\<A_{\ell,k}(x_t),N(t)\>N(t).
  \end{equation*}
As a result,
  \begin{align}\label{sect-5-term-3}
  &\hskip13pt \frac{\nu}{2c\rho_t(x)}\sum_{\ell\geq 1}\frac{2b_\ell}{D_\ell}
  \sum_{k=1}^{D_\ell}\big[\nabla_{T_0}\nabla_{J_{\ell,k}(0)}J_{\ell,k}(0)-R(T_0,J_{\ell,k}(0))J_{\ell,k}(0)\big]\cr
  &=-\nu\, e(t)+ \frac\nu c \sum_{\ell\geq 1}\frac{b_\ell}{D_\ell}\sum_{k=1}^{D_\ell}
  \<A_{\ell,k}(x_t), e_0(t)\>\<A_{\ell,k}(x_t),N(t)\>N(t).
  \end{align}
Unfortunately, we are unable to simplify the coefficient of the normal part $N(t)$, but we have
a good estimate on it. In fact, by Cauchy's inequality and Lemma \ref{sect-5-lem-2},
  $$\frac{2}{D_\ell}\sum_{k=1}^{D_\ell}\<A_{\ell,k}(x_t), e_0(t)\>\<A_{\ell,k}(x_t),N(t)\>\leq 1,$$
which implies
  \begin{equation}\label{sect-5-term-3.1}
  \frac\nu c \sum_{\ell\geq 1}\frac{b_\ell}{D_\ell}\sum_{k=1}^{D_\ell}
  \<A_{\ell,k}(x_t), e_0(t)\>\<A_{\ell,k}(x_t),N(t)\> \leq \frac\nu{2c} G(0)=\nu.
  \end{equation}

Finally we handle with the last term in \eqref{sect-5-lem-1.1}. As the calculation is very long we
put it in the following lemma.

\begin{lemma}\label{sect-5-lem-3}
We have
  \begin{equation}\label{sect-5-lem-3.1}
  \begin{split}
  &\hskip13pt \frac{\nu}{2c\rho_t^2(x)}\sum_{\ell\geq 1}\frac{2b_\ell}{D_\ell}\sum_{k=1}^{D_\ell}
  \int_0^1 \big(\big|\nabla_{T_a} J_{\ell,k}(a)\big|^2- \big\<R(T_a,J_{\ell,k}(a))J_{\ell,k}(a),T_a\big\>\big)\,\d a\cr
  &=\frac\nu c [G(0)-G(\rho_t(x))]\bigg\{1+\frac{\cot^2\rho_t(x)}2
  \bigg[\rho_t^2(x)\bigg(1+\frac{\sin(2\rho_t(x))}{2\rho_t(x)}\bigg)
  -\bigg(1-\frac{\sin(2\rho_t(x))}{2\rho_t(x)}\bigg)\bigg]\cr
  &\hskip40pt +\frac{1-\cos(2\rho_t(x))}{4\rho_t(x)}[1+\rho_t^2(x)]\cot\rho_t(x)\bigg\}\cr
  &\hskip13pt +\nu [\rho_t^2(x)-1] -\frac\nu c G'(\rho_t(x))\frac{1-\cos(2\rho_t(x))}{4\rho_t(x)}[1+\rho_t^2(x)]\cr
  &\hskip13pt -\frac\nu c G'(\rho_t(x))\frac{\cot\rho_t(x)}2\bigg[\rho_t^2(x)\bigg(1+\frac{\sin(2\rho_t(x))}{2\rho_t(x)}\bigg)
  -\bigg(1-\frac{\sin(2\rho_t(x))}{2\rho_t(x)}\bigg)\bigg].
  \end{split}
  \end{equation}
Moreover, if $G$ satisfies \eqref{G-theta.1}, then as $\rho_t(x)\to 0$, the right hand side is equal to
$-\nu+o(\rho_t(x))$.
\end{lemma}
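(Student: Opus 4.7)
\textbf{Strategy for Lemma \ref{sect-5-lem-3}.} The plan is to insert the explicit formulas for $J_{\ell,k}^{(1)}(a)$ and $J_{\ell,k}^{(2)}(a)$ stated just before the lemma into the integrand and reduce everything to scalars. Since $\gamma_a(t)$ is a geodesic we have $\nabla_{T_a} e_a(t) = 0$, and because on $S^2$ the unit normal $N(t)$ along a geodesic is parallel (as noted at the end of Section \ref{sect-notations}), $\nabla_{T_a} N(t) = 0$ as well; hence $\nabla_{T_a} J_{\ell,k}(a) = \dot J_{\ell,k}^{(1)}(a)\, e_a(t) + \dot J_{\ell,k}^{(2)}(a)\, N(t)$, where the dot is $\d/\d a$. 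Since $S^2$ has constant sectional curvature $1$, $R(X,Y)Z = \<Y,Z\>X-\<X,Z\>Y$, and using $|T_a|=\rho_t(x)$, $\<T_a,J_{\ell,k}(a)\>=\rho_t(x)J_{\ell,k}^{(1)}(a)$ one finds $\<R(T_a,J_{\ell,k})J_{\ell,k},T_a\>=\rho_t^2(x)(J_{\ell,k}^{(2)}(a))^2$. The integrand therefore reduces to $\big(\dot J_{\ell,k}^{(1)}\big)^2 + \big(\dot J_{\ell,k}^{(2)}(a)\big)^2 - \rho_t^2(x)\big(J_{\ell,k}^{(2)}(a)\big)^2$.

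\textbf{Integration in $a$.} Since $J_{\ell,k}^{(1)}$ is affine, $\dot J_{\ell,k}^{(1)} = -\frac{1}{\sin\rho_t(x)}(\<A_{\ell,k}(x_t),y_t\> + \<A_{\ell,k}(y_t),x_t\>)$, and the first integral is simply its square. Substituting the explicit expression for $J_{\ell,k}^{(2)}(a)$, the remaining two terms are polynomials in $\cos^2(a\rho_t(x))$, $\sin^2(a\rho_t(x))$, and $\sin(a\rho_t(x))\cos(a\rho_t(x))$, which I integrate via the elementary identities
\[
\int_0^1\!\cos^2(a\rho)\,\d a = \tfrac12+\tfrac{\sin(2\rho)}{4\rho},\quad \int_0^1\!\sin^2(a\rho)\,\d a = \tfrac12-\tfrac{\sin(2\rho)}{4\rho},\quad \int_0^1\!\sin(a\rho)\cos(a\rho)\,\d a = \tfrac{1-\cos(2\rho)}{4\rho}.
\]
This produces, for each $(\ell,k)$, a closed form as a trigonometric polynomial in $\rho_t(x)$ multiplying the bilinear invariants $\<A_{\ell,k}(x_t),y_t\>$, $\<A_{\ell,k}(y_t),x_t\>$, $\<A_{\ell,k}(x_t),N(t)\>$, $\<A_{\ell,k}(y_t),N(t)\>$ and their pairwise products.

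\textbf{Summation over $(\ell,k)$.} I then apply $\frac{2b_\ell}{D_\ell}\sum_k$ invariant by invariant. Lemma \ref{sect-2-lem-1}(iii) handles $\sum_k \<A_{\ell,k}(x_t),y_t\>^2$, $\sum_k \<A_{\ell,k}(y_t),x_t\>^2$ and $\sum_k (\<A_{\ell,k}(x_t),y_t\>+\<A_{\ell,k}(y_t),x_t\>)^2$; Lemma \ref{sect-5-lem-2} handles the $N(t)$-invariants, and the identity $\frac{2}{D_\ell}\sum_k\<A_{\ell,k}(x_t),y_t\>\<A_{\ell,k}(y_t),x_t\>=-\sin^2\rho_t(x)\,\gamma_\ell(\cos\rho_t(x))$ established inside the proof of Lemma \ref{sect-5-lem-2} handles the mixed product. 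Each collapses to an expression in $\gamma_\ell(\cos\rho_t(x))$ and $\gamma_\ell'(\cos\rho_t(x))$, and summation in $\ell$ against $b_\ell$ via the definitions $G(\rho)=\sum_\ell b_\ell\gamma_\ell(\cos\rho)$ and $\sum_\ell b_\ell\gamma_\ell'(\cos\rho) = -G'(\rho)/\sin\rho$ assembles the combinations of $G(0)-G(\rho_t(x))$ and $G'(\rho_t(x))$ appearing in \eqref{sect-5-lem-3.1}. The free term $\nu[\rho_t^2(x)-1]$ comes from the $-\rho_t^2(x)(J_{\ell,k}^{(2)})^2$ contributions together with the normalisations $\frac{2}{D_\ell}\sum_k\<A_{\ell,k}(x_t),N(t)\>^2=\frac{2}{D_\ell}\sum_k\<A_{\ell,k}(y_t),N(t)\>^2=1$ and the prefactor $\frac{\nu}{2c\rho_t^2(x)}\sum_\ell b_\ell = \frac{\nu}{\rho_t^2(x)}$.

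\textbf{Asymptotics.} For the final claim I use that \eqref{G-theta.1} forces $G'(0)=0$, so $G(0)-G(\rho)=O(\rho^2)$ and $G'(\rho)=O(\rho)$. Taylor expansions $\sin(2\rho)/(2\rho)=1-2\rho^2/3+O(\rho^4)$, $(1-\cos(2\rho))/(4\rho)=\rho/2+O(\rho^3)$, $\cot\rho=1/\rho-\rho/3+O(\rho^3)$ show that the curly-brace factor multiplying $\frac{\nu}{c}[G(0)-G(\rho_t(x))]$ remains bounded as $\rho_t(x)\to 0$, so this block contributes $O(\rho_t^2(x))$. The two explicit $G'(\rho_t(x))$-terms are $O(\rho_t(x))$, so the dominant remaining part is $\nu[\rho_t^2(x)-1]=-\nu+O(\rho_t^2(x))$, giving $-\nu+o(\rho_t(x))$ as claimed. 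The main obstacle I expect is the sheer bookkeeping in Step 3, particularly tracking the signs of the cross terms that mix $\cot\rho_t(x)$ with $\gamma_\ell'$ to produce the $G'(\rho_t(x))$ contributions; a small sign error there would corrupt the cancellations that isolate the clean $-\nu$ leading term.
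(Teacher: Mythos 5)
Your overall architecture coincides with the paper's: decompose $J_{\ell,k}$ in the parallel frame $\{e_a(t),N(t)\}$ (using that $e_a$ and $N$ are parallel along the geodesic), use constant curvature to reduce $\big\<R(T_a,J_{\ell,k}(a))J_{\ell,k}(a),T_a\big\>$ to $\rho_t^2(x)\big(J_{\ell,k}^{(2)}(a)\big)^2$, integrate the elementary trigonometric functions of $a$, and then collapse the sums over $k$ and $\ell$ via Lemma \ref{sect-2-lem-1}(iii), Lemma \ref{sect-5-lem-2}, the mixed identity $\frac2{D_\ell}\sum_k\<A_{\ell,k}(x_t),y_t\>\<A_{\ell,k}(y_t),x_t\>=-\sin^2\rho_t(x)\,\gamma_\ell(\cos\rho_t(x))$ from the proof of Lemma \ref{sect-5-lem-2}, and the relation $\sum_\ell b_\ell\gamma'_\ell(\cos\rho)=-G'(\rho)/\sin\rho$. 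Your treatment of the asymptotics is correct and in fact more explicit than the paper's one-line remark.

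There is, however, one concrete point where your reduction will not land on \eqref{sect-5-lem-3.1}. You take $\big|\nabla_{T_a}J_{\ell,k}(a)\big|^2=\big(\tfrac{\d}{\d a}J_{\ell,k}^{(1)}\big)^2+\big(\tfrac{\d}{\d a}J_{\ell,k}^{(2)}\big)^2$, whereas the paper evaluates this term as $\rho_t^2(x)\big[\big(\tfrac{\d}{\d a}J_{\ell,k}^{(1)}\big)^2+\big(\tfrac{\d}{\d a}J_{\ell,k}^{(2)}\big)^2\big]$: it writes $\big|\nabla_{T_a}J_{\ell,k}(a)\big|^2=\rho_t^2(x)|\dot J_{\ell,k}(a)|^2$ and then takes for $\dot J_{\ell,k}(a)$ the plain $a$-derivative of the components. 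Only the latter normalization yields the stated right-hand side. For instance, the tangential contribution in your version is $\frac1{2\rho_t^2(x)}\cdot 2\big[1-\gamma_\ell(\cos\rho_t(x))\big]$, which after summation gives $\frac{\nu}{c}\,\frac{G(0)-G(\rho_t(x))}{\rho_t^2(x)}$ rather than the term $\frac{\nu}{c}\big[G(0)-G(\rho_t(x))\big]\cdot 1$ appearing as the leading ``$1$'' in the first curly brace of \eqref{sect-5-lem-3.1}; similarly, the characteristic combinations $\rho_t^2(x)\cos^2(a\rho_t(x))-\sin^2(a\rho_t(x))$, which integrate to the blocks $\rho_t^2(x)\big(1+\frac{\sin(2\rho_t(x))}{2\rho_t(x)}\big)-\big(1-\frac{\sin(2\rho_t(x))}{2\rho_t(x)}\big)$, can only arise when the derivative terms carry the extra weight $\rho_t^2(x)$ relative to the curvature term. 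With your normalization the final expression is genuinely different, and its $\rho_t(x)\to 0$ limit acquires an extra contribution proportional to $\lim_{\rho\to0}\big(G(0)-G(\rho)\big)/\rho^2$, so the claimed $-\nu+o(\rho_t(x))$ would fail. To prove the lemma as stated you must adopt the paper's convention for $\big|\nabla_{T_a}J_{\ell,k}(a)\big|^2$ from the outset (or flag the discrepancy in normalization explicitly); everything else in your plan then goes through.
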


\begin{proof}
The last assertion is obvious.
In the sequel we concentrate on the proof of \eqref{sect-5-lem-3.1}.
It holds $\big|\nabla_{T_a} J_{\ell,k}(a)\big|^2=\rho_t^2(x)\big|\nabla_{e_a(t)} J_{\ell,k}(a)\big|^2
=\rho_t^2(x)|\dot J_{\ell,k}(a)|^2$.
By the expression of $J_{\ell,k}(a)$, we have
  \begin{align*}
  \dot J_{\ell,k}(a)&=-\frac{\<A_{\ell,k}(x_t), y_t\>+\<A_{\ell,k}(y_t), x_t\>}{\sin\rho_t(x)}e_a(t)
  +\rho_t(x)\bigg\{\frac{\cos(a\rho_t(x))}{\sin\rho_t(x)}\<A_{\ell,k}(y_t),N(t)\>\cr
  &\hskip63pt -\big[\sin(a\rho_t(x))+\cot\rho_t(x)\cos(a\rho_t(x))\big]\<A_{\ell,k}(x_t),N(t)\>\bigg\}N(t).
  \end{align*}
Thus
  \begin{align*}
  \big|\nabla_{T_a} J_{\ell,k}(a)\big|^2&=\frac{\rho_t^2(x)}{\sin^2\rho_t(x)}
  \big[\<A_{\ell,k}(x_t), y_t\>+\<A_{\ell,k}(y_t), x_t\>\big]^2
  +\rho_t^4(x)\frac{\cos^2(a\rho_t(x))}{\sin^2\rho_t(x)}\<A_{\ell,k}(y_t),N(t)\>^2\cr
  &\hskip13pt + \rho_t^4(x)\big[\sin(a\rho_t(x))+\cot\rho_t(x)\cos(a\rho_t(x))\big]^2\<A_{\ell,k}(x_t),N(t)\>^2\cr
  &\hskip13pt -2\rho_t^4(x)\frac{\cos(a\rho_t(x))}{\sin\rho_t(x)}
  \big[\sin(a\rho_t(x))+\cot\rho_t(x)\cos(a\rho_t(x))\big]\cr
  &\hskip25pt \times \<A_{\ell,k}(y_t),N(t)\>\<A_{\ell,k}(x_t),N(t)\>.
  \end{align*}
By Lemmas \ref{sect-2-lem-1} and \ref{sect-5-lem-2}, we arrive at
  \begin{equation}\label{sect-5-lem-3.2}
  \begin{split}
  \frac2{D_\ell}\sum_{k=1}^{D_\ell}\big|\nabla_{T_a} J_{\ell,k}(a)\big|^2
  &=2\rho_t^2(x)\big[1-\gamma_\ell(\cos\rho_t(x))\big]
  +\rho_t^4(x)\frac{\cos^2(a\rho_t(x))}{\sin^2\rho_t(x)}\cr
  &\hskip13pt +\rho_t^4(x)\big[\sin(a\rho_t(x))+\cot\rho_t(x)\cos(a\rho_t(x))\big]^2\cr
  &\hskip13pt -2\rho_t^4(x)\frac{\cos(a\rho_t(x))}{\sin\rho_t(x)} \big[\sin(a\rho_t(x))+\cot\rho_t(x)\cos(a\rho_t(x))\big]\cr
  &\hskip25pt \times \big[\cos\rho_t(x)\, \gamma_\ell(\cos\rho_t(x))-\sin^2\rho_t(x)\, \gamma'_\ell(\cos\rho_t(x))\big].
  \end{split}
  \end{equation}

Now we turn to the term $\big\<R(T_a,J_{\ell,k}(a))J_{\ell,k}(a),T_a\big\>$. Since
$T_a=\rho_t(x)e_a(t)$ and $J_{\ell,k}(a)=J_{\ell,k}^{(1)}(a)e_a(t)+J_{\ell,k}^{(2)}(a)N(t)$, we have
  \begin{align*}
  \big\<R(T_a,J_{\ell,k}(a))J_{\ell,k}(a),T_a\big\>&=|T_a|^2|J_{\ell,k}(a)|^2-\<T_a,J_{\ell,k}(a)\>^2\cr
  &=\rho_t^2(x)\big[J_{\ell,k}^{(1)}(a)^2+J_{\ell,k}^{(2)}(a)^2\big]-\rho_t^2(x)J_{\ell,k}^{(1)}(a)^2\cr
  &=\rho_t^2(x) J_{\ell,k}^{(2)}(a)^2.
  \end{align*}
Substituting the expression of $J_{\ell,k}^{(2)}(a)$ into the above equality yields
  \begin{align*}
  &\hskip13pt \big\<R(T_a,J_{\ell,k}(a))J_{\ell,k}(a),T_a\big\>\cr
  &=\rho_t^2(x)\bigg\{ \frac{\sin^2(a\rho_t(x))}{\sin^2\rho_t(x)} \<A_{\ell,k}(y_t),N(t)\>^2
  + \big[\cos(a\rho_t(x))-\cot\rho_t(x)\sin(a\rho_t(x))\big]^2\<A_{\ell,k}(x_t),N(t)\>^2\cr
  &\hskip46pt +2\frac{\sin(a\rho_t(x))}{\sin\rho_t(x)} \big[\cos(a\rho_t(x))-\cot\rho_t(x)\sin(a\rho_t(x))\big] \<A_{\ell,k}(y_t),N(t)\>\<A_{\ell,k}(x_t),N(t)\>\bigg\}.
  \end{align*}
Consequently, by Lemma \ref{sect-5-lem-2},
  \begin{align*}
  \frac2{D_\ell}\sum_{k=1}^{D_\ell}\big\<R(T_a,J_{\ell,k}(a))J_{\ell,k}(a),T_a\big\>
  &=\rho_t^2(x)\bigg\{\frac{\sin^2(a\rho_t(x))}{\sin^2\rho_t(x)}+ \big[\cos(a\rho_t(x))-\cot\rho_t(x)\sin(a\rho_t(x))\big]^2\cr
  &\hskip23pt +2\frac{\sin(a\rho_t(x))}{\sin\rho_t(x)} \big[\cos(a\rho_t(x))-\cot\rho_t(x)\sin(a\rho_t(x))\big]\cr
  &\hskip35pt\times \big[\cos\rho_t(x)\, \gamma_\ell(\cos\rho_t(x))-\sin^2\rho_t(x)\, \gamma'_\ell(\cos\rho_t(x))\big]\bigg\}.
  \end{align*}

Combining the above equality with \eqref{sect-5-lem-3.2}, we arrive at
  \begin{align*}
  &\hskip13pt \frac1{2\rho_t^2(x)}\cdot \frac2{D_\ell}\sum_{k=1}^{D_\ell}\big(\big|\nabla_{T_a} J_{\ell,k}(a)\big|^2
  - \big\<R(T_a,J_{\ell,k}(a))J_{\ell,k}(a),T_a\big\>\big)\cr
  &=[1-\gamma_\ell(\cos\rho_t(x))]+\frac1{2\sin^2\rho_t(x)}\big[\rho_t^2(x)\cos^2(a\rho_t(x))-\sin^2(a\rho_t(x))\big]\cr
  &\hskip13pt +\frac12 \big\{\big[\rho_t^2(x)\sin^2(a\rho_t(x))-\cos^2(a\rho_t(x))\big]
  +\cot^2\rho_t(x)\big[\rho_t^2(x)\cos^2(a\rho_t(x))-\sin^2(a\rho_t(x))\big]\cr
  &\hskip40pt +\cot\rho_t(x)\sin(2a\rho_t(x))[1+\rho_t^2(x)]\big\}\cr
  &\hskip13pt -\frac1{2\sin\rho_t(x)}\big[\cos\rho_t(x)\, \gamma_\ell(\cos\rho_t(x))-\sin^2\rho_t(x)\, \gamma'_\ell(\cos\rho_t(x))\big]\cr
  &\hskip25pt \times\big\{\sin(2a\rho_t(x))[1+\rho_t^2(x)]
  +2\cot\rho_t(x)\big[\rho_t^2(x)\cos^2(a\rho_t(x))-\sin^2(a\rho_t(x))\big]\big\}.
  \end{align*}
In the second term on the right hand side, we note that $1=\sin^2\rho_t(x)+\cos^2\rho_t(x)$ and obtain
  \begin{align*}
  &\hskip13pt \frac1{2\rho_t^2(x)}\cdot \frac2{D_\ell}\sum_{k=1}^{D_\ell}\big(\big|\nabla_{T_a} J_{\ell,k}(a)\big|^2
  - \big\<R(T_a,J_{\ell,k}(a))J_{\ell,k}(a),T_a\big\>\big)\cr
  &=[1-\gamma_\ell(\cos\rho_t(x))]+\frac12 [\rho_t^2(x)-1]
  +\cot^2\rho_t(x)\big[\rho_t^2(x)\cos^2(a\rho_t(x))-\sin^2(a\rho_t(x))\big]\cr
  &\hskip13pt +\frac12 \cot\rho_t(x)\sin(2a\rho_t(x))[1+\rho_t^2(x)]
  -\big[\cos\rho_t(x)\, \gamma_\ell(\cos\rho_t(x))-\sin^2\rho_t(x)\, \gamma'_\ell(\cos\rho_t(x))\big]\cr
  &\hskip45pt \times\bigg\{\frac{\sin(2a\rho_t(x))}{2\sin\rho_t(x)}[1+\rho_t^2(x)]
  +\frac{\cot(\rho_t(x))}{\sin\rho_t(x)}\big[\rho_t^2(x)\cos^2(a\rho_t(x))-\sin^2(a\rho_t(x))\big]\bigg\}.
  \end{align*}
Combining the terms with common factors yields
  \begin{align*}
  &\hskip13pt \frac1{2\rho_t^2(x)}\cdot \frac2{D_\ell}\sum_{k=1}^{D_\ell}\big(\big|\nabla_{T_a} J_{\ell,k}(a)\big|^2
  - \big\<R(T_a,J_{\ell,k}(a))J_{\ell,k}(a),T_a\big\>\big)\cr
  &=[1-\gamma_\ell(\cos\rho_t(x))]+\frac12 [\rho_t^2(x)-1]
  +\big[\rho_t^2(x)\cos^2(a\rho_t(x))-\sin^2(a\rho_t(x))\big]\cr
  &\hskip50pt \times \big\{\cot^2\rho_t(x)
  [1-\gamma_\ell(\cos\rho_t(x))]+ \cos\rho_t(x)\, \gamma'_\ell(\cos\rho_t(x))\big\}\cr
  &\hskip13pt +\frac12\sin(2a\rho_t(x))[1+\rho_t^2(x)]\big\{\cot\rho_t(x)[1-\gamma_\ell(\cos\rho_t(x))]
  + \sin\rho_t(x)\, \gamma'_\ell(\cos\rho_t(x))\big\}.
  \end{align*}
Rearranging the above equality leads to
  \begin{align*}
  &\hskip13pt \frac1{2\rho_t^2(x)}\cdot \frac2{D_\ell}\sum_{k=1}^{D_\ell}\big(\big|\nabla_{T_a} J_{\ell,k}(a)\big|^2
  - \big\<R(T_a,J_{\ell,k}(a))J_{\ell,k}(a),T_a\big\>\big)\cr
  &=[1-\gamma_\ell(\cos\rho_t(x))]\bigg\{1+\big[\rho_t^2(x)\cos^2(a\rho_t(x))-\sin^2(a\rho_t(x))\big]\cot^2\rho_t(x)\cr
  &\hskip30pt +\frac12\sin(2a\rho_t(x))[1+\rho_t^2(x)]\cot\rho_t(x)\bigg\}
  +\frac12 [\rho_t^2(x)-1] +\sin\rho_t(x)\, \gamma'_\ell(\cos\rho_t(x))\cr
  &\hskip30pt \times \bigg\{\frac12\sin(2a\rho_t(x))[1+\rho_t^2(x)]
  +\big[\rho_t^2(x)\cos^2(a\rho_t(x))-\sin^2(a\rho_t(x))\big]\cot\rho_t(x)\bigg\}.
  \end{align*}
Hence by the definition of $G(\theta)$,
  \begin{equation*}
  \hskip-15pt \begin{split}
  &\hskip13pt \frac1{2\rho_t^2(x)}\sum_{\ell=1}^\infty\frac{2b_\ell}{D_\ell}\sum_{k=1}^{D_\ell}
  \big(\big|\nabla_{T_a} J_{\ell,k}(a)\big|^2- \big\<R(T_a,J_{\ell,k}(a))J_{\ell,k}(a),T_a\big\>\big)\cr
  &=[G(0)-G(\rho_t(x))]\bigg\{1+\big[\rho_t^2(x)\cos^2(a\rho_t(x))-\sin^2(a\rho_t(x))\big]\cot^2\rho_t(x)\cr
  &\hskip30pt +\frac12\sin(2a\rho_t(x))[1+\rho_t^2(x)]\cot\rho_t(x)\bigg\}
  +\frac{G(0)}2 [\rho_t^2(x)-1]\cr
  &\hskip13pt -G'(\rho_t(x))\bigg\{\frac12\sin(2a\rho_t(x))[1+\rho_t^2(x)]
  +\big[\rho_t^2(x)\cos^2(a\rho_t(x))-\sin^2(a\rho_t(x))\big]\cot\rho_t(x)\bigg\}.
  \end{split}
  \end{equation*}
Finally, noticing that
  $$\int_0^1 \cos^2(a\rho_t(x))\,\d a=\frac12\bigg(1+\frac{\sin(2\rho_t(x))}{2\rho_t(x)}\bigg),\quad
  \int_0^1 \sin^2(a\rho_t(x))\,\d a=\frac12\bigg(1-\frac{\sin(2\rho_t(x))}{2\rho_t(x)}\bigg)$$
and
  $$\int_0^1 \sin(2 a\rho_t(x))\,\d a=\frac{1-\cos(2\rho_t(x))}{2\rho_t(x)},$$
thus, integrating both sides of the above equality from 0 to 1 gives us the desired equality.
\end{proof}

Now we can draw some conclusions from the above computations.

\begin{corollary}\label{sect-5-cor}
Assume that the function $G(\theta)$ defined in \eqref{G-theta} satisfies \eqref{G-theta.1}. Then
\begin{itemize}
\item[\rm(1)] the quadratic variation of $\D e(t)$ is dominated by $C_1 t$ for some $C_1>0$;
\item[\rm(2)] as $\rho_t(x)\to 0$, the quantity $\<\D e(t),e(t)\>$ vanishes.
\end{itemize}
\end{corollary}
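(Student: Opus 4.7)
The plan is to assemble the explicit computations already carried out in this section: decompose the right-hand side of \eqref{sect-5-lem-1.1} along the moving orthonormal frame $\{e(t),N(t)\}$ using \eqref{sect-5-term-1.1}, \eqref{sect-5-term-2}, \eqref{sect-5-term-3} and Lemma \ref{sect-5-lem-3}, then read off the quadratic variation for (1) and the $e(t)$-component of the drift for (2).

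\textbf{Part (1).} Among the four summands in \eqref{sect-5-lem-1.1}, only $\d\xi_t=\frac{1}{\rho_t(x)}\dot J_0(\d_m x_t^N,\d_m y_t^N)$ is a martingale; the remaining three are of locally bounded variation and contribute nothing to the quadratic variation. Hence $\d\langle\D e\rangle_t=\d\langle\xi,\xi\rangle_t$, and by \eqref{sect-5-term-1.2} it suffices to bound
$$2\nu+\frac{2\nu}{c}\cot^2\rho_t(x)\bigl[G(0)-G(\rho_t(x))\bigr]-\frac{2\nu}{c}\cot\rho_t(x)\,G'(\rho_t(x))$$
by a constant. Hypothesis \eqref{G-theta.1} forces $G'(0)=0$, and together with $G\in C^2([0,\pi])$ yields the Taylor estimate $G(0)-G(\rho)=O(\rho^2)$; combined with $\cot^2\rho=O(\rho^{-2})$ this keeps the middle term bounded as $\rho\downarrow 0$. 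The linear bound $|G'(\rho)|\leq C\rho$ gives $|\cot\rho\,G'(\rho)|\leq C|\cos\rho|\leq C$. On compact sub-intervals of $(0,\pi)$ both expressions are continuous in $\rho$, hence globally controlled, yielding the desired constant $C_1$.

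\textbf{Part (2).} I project each summand of \eqref{sect-5-lem-1.1} onto $e(t)$. The martingale piece \eqref{sect-5-term-1.1} and the drift piece \eqref{sect-5-term-2} both point along $N(t)$, so their inner product with $e(t)$ vanishes. By \eqref{sect-5-term-3} the third term has $e(t)$-coefficient exactly $-\nu$ (its $N(t)$-component is discarded by the projection). The last term of \eqref{sect-5-lem-1.1} lies entirely along $e(t)$ with coefficient $-\Xi(t)$, where $\Xi(t)$ denotes the left-hand side of \eqref{sect-5-lem-3.1}; Lemma \ref{sect-5-lem-3} gives $\Xi(t)=-\nu+o(\rho_t(x))$ as $\rho_t(x)\to 0$, so this contribution is $(\nu+o(\rho_t(x)))\,dt$. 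Summing the $e(t)$-coefficients,
$$\bigl\langle \D e(t),e(t)\bigr\rangle=\bigl(-\nu+\nu+o(\rho_t(x))\bigr)\,dt=o(\rho_t(x))\,dt\longrightarrow 0.$$

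The real technical content has already been spent in Lemma \ref{sect-5-lem-3}; once its fine asymptotic $-\nu+o(\rho_t(x))$ is in hand, the precise cancellation with the $-\nu$ coming from \eqref{sect-5-term-3} produces (2), while (1) reduces to routine book-keeping on the expression in \eqref{sect-5-term-1.2} controlled by the $C^2$-regularity of $G$ and the bound \eqref{G-theta.1}.
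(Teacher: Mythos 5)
Your argument is correct and is essentially identical to the paper's proof, which simply cites \eqref{sect-5-term-1.2} for (1) and \eqref{sect-5-term-3} together with Lemma \ref{sect-5-lem-3} for (2); you have just written out the bookkeeping, and the cancellation of $-\nu$ against $\nu+o(\rho_t(x))$ in (2) is exactly as intended. The only caveat (shared with the paper, so not held against you) is that your bound in (1) genuinely degenerates as $\rho_t(x)\to\pi$, since $\cot^2\rho\,[G(0)-G(\rho)]$ need not stay bounded near the cut locus; this is harmless here because the entire construction of $e(t)$ presupposes $y_t$ is not in the cut locus of $x_t$ and the section works in the small-distance regime.
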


\begin{proof}
The first assertion (1) is a consequence of the formula \eqref{sect-5-term-1.2},
while the second one follows from equality \eqref{sect-5-term-3} and Lemma \ref{sect-5-lem-3}.
\end{proof}

It is a pity that we are unable to give such a good characterisation of $e(t)$
as in \cite[Proposition 7.2]{AC12b}, where the underlying space is the two dimensional
flat torus instead of the sphere $S^2$. We finish the paper with the following remark.

\begin{remark}{\rm
Let $\{ b_\ell\}_{\ell\geq 1}$ be given as in \eqref{sect-2.1} with $\alpha\in(0,2)$.
Then by \cite[Lemma 9.5]{LeJan}, the function $G(\theta)$ defined in \eqref{G-theta} is differentiable
on $(0,\pi)$ and
  $$\lim_{\theta\to 0+}\frac{G(0)-G(\theta)}{\theta^\alpha}=KG(0)\quad \mbox{and}\quad
  \lim_{\theta\to 0+}\frac{G'(\theta)}{\theta^{\alpha-1}}=-\alpha KG(0)$$
for some constant $K>0$. Using these two limits, it is easy to show that as $\rho_t(x)\to 0$,
the right hand side of \eqref{sect-5-lem-3.1} is equal to $-\nu+O(\rho_t^\alpha(x))$,
hence the second assertion of Corollary \ref{sect-5-cor} still holds.
Moreover, we deduce from the equality \eqref{sect-5-term-1.2} that
  $$\frac{\d\<\xi,\xi\>_t}{\d t}\sim 2\nu +4\nu K(1+\alpha)\rho_t(x)^{\alpha-2}
  \quad \mbox{as }\rho_t(x)\to 0,$$
which means that the derivative of the quadratic variation of $\d\xi_t=
\frac1{\rho_t(x)}\dot J_0\big(\d_m x_t^N,\d_m y_t^N\big)$ tends to infinity. Thus we observe
the similar irregular behavior as in \cite[Proposition 7.2]{AC12b}. However, in this case,
the flow $g_t$ (and $\tilde g_t$) is not a flow of maps when the drift $u$ vanishes, as
shown in \cite[Theorem 9.4(c)]{LeJan} (notice that $\eta=1$; see also the first paragraph on p.858).}
\end{remark}

\medskip

\textbf{Acknowledgements.} The author would like to thank Professor Shizan Fang for
suggesting him to study the stochastic Lagrangian flows on the sphere, as an example
of the general framework of M. Arnaudon and A.B. Cruzeiro \cite{AC12b, AAC13}. He is
also grateful to the anonymous referees for their valuable suggestions which help to
correct the mistakes in the first version.

\end{document}